\renewcommand{\[}{\begin{equation}}
\renewcommand{\]}{\end{equation}}
\newtheorem{proposition}{\sc Proposition}[section]
\newtheorem{lemma}[proposition]{\sc Lemma}
\newtheorem{corollary}[proposition]{\sc Corollary}
\newtheorem{theorem}[proposition]{\sc Theorem}
\newtheorem{conjecture}[proposition]{\sc Conjecture}
\newtheorem{definition}[proposition]{\sc Definition}
\theoremstyle{definition}
\theoremstyle{remark}
\newtheorem{remark}[proposition]{\sc Remark}
\newcommand{\id}{\operatorname{id}}
\renewcommand{\ker}{\operatorname{Ker}}
\def\veps{{\varepsilon}}
\newcommand{\ot}{\otimes}
\renewcommand{\phi}{\varphi}
\renewcommand{\epsilon}{\varepsilon}
\newcommand{\cO}{\mathcal{O}}
\def\P{\mathcal{P}}
\def\C{{\mathbb C}}
\def\Z{{\mathbb Z}}
\def\z2{{\mathbb Z}/2{\mathbb Z}}
\def\id{{\rm id}}
\newcommand{\ev}{{\mathrm{ev}}}
\newcommand{\bp}{\begin{proof}}
\newcommand{\ep}{\end{proof}}
\newcommand{\eps}{\varepsilon}
\newcommand{\CC}{\mathcal{C}}
\newcommand{\SSS}{\mathcal{S}}
\newcommand{\mfg}{\mathfrak{g}}
\newcommand{\Pol}{\mathcal{O}}
\newcommand{\cP}{\mathcal{P}}
\begin{document}
\baselineskip=15pt
\parindent=5mm

\author{Ludwik D\k abrowski}
\address{SISSA (Scuola Internazionale Superiore di Studi Avanzati)\\
Via Bonomea 265, 34136 Trieste, Italy
}
\email{dabrow@sissa.it}

\author{Piotr M.~Hajac}
\address{Institytut Matematyczny, Polska Akademia Nauk\\
ul.\ \'Sniadeckich 8, 00-656 Warszawa, Poland\\
}
\email{pmh@impan.pl}

\author{Sergey Neshveyev}
\address{Department of Mathematics, University of Oslo\\ P.O. Box 105, 3 Blindern, 0316 Oslo,
Norway\\
}
\email{sergeyn@math.uio.no}

\title[Noncommutative Borsuk-Ulam-type conjectures revisited]
{\large Noncommutative Borsuk-Ulam-type\\\vspace*{3mm} conjectures revisited}
\vspace*{5mm}
\begin{abstract}
Let $H$ be the C*-algebra of a non-trivial compact quantum group acting freely on a unital C*-algebra $A$.
It was recently conjectured that there does not exist an equivariant
 $*$-homomorphism from $A$ (type-I case) or $H$ (type-II case)
to the equivariant noncommutative join C*-algebra $A\circledast^\delta H$.
When $A$ is the C*-algebra of functions on a sphere, and $H$ is the C*-algebra of functions on $\Z/2\Z$
acting antipodally on the sphere, then the conjecture of type I becomes the celebrated Borsuk-Ulam theorem.
Following recent work of Passer, we prove the conjecture of type I for compact quantum groups admitting a non-trivial torsion character.
Next, we prove that, if a compact quantum group admits a representation whose \mbox{$K_1$-class} is non-trivial and $A$ admits a character,
then a stronger version of the type-II conjecture holds:  the
finitely generated projective module associated with $A\circledast^\delta H$ via this representation is not
stably free.
In particular, we apply this result to the
$q$-deformations of  compact connected semisimple Lie groups and to the reduced group C*-algebras of free groups on $n>1$
generators.
\end{abstract}
\maketitle
{\tableofcontents}
\clearpage

\section{Introduction}
\noindent
The goal of this paper is to prove, under some additional assumptions, both types  of the conjecture stated
in~\cite[Conjecture~2.3]{bdh15}.
We will also derive some consequences of  the type~I conjecture,
and a K-theoretic strengthening of  the type II conjecture. 

\subsection{Around the classical Borsuk-Ulam theorem}
The Borsuk-Ulam theorem \cite{b-k33}
is a fundamental theorem of topology concerning spheres in Euclidean spaces.
It can be phrased in the following three equivalent ways:

\begin{theorem}[Borsuk-Ulam]
For any natural number $n$,
if $f\colon S^{n+1}
\to\mathbb{R}^{n+1}$ is continuous, then there exists a pair $(p,-p)$ of antipodal points on
$S^{n+1}$ such that $f(p)=f(-p)$.
\end{theorem}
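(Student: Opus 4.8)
The plan is to reduce the statement to a purely topological fact about antipode-preserving maps, and then to extract a contradiction from the mod-$2$ cohomology ring of real projective space. Argue by contradiction: suppose $f\colon S^{n+1}\to\R^{n+1}$ is continuous with $f(p)\neq f(-p)$ for every $p\in S^{n+1}$. Then
\[
g(p)=\frac{f(p)-f(-p)}{\|f(p)-f(-p)\|}
\]
is a well-defined continuous map $g\colon S^{n+1}\to S^n$ satisfying $g(-p)=-g(p)$, i.e.\ an \emph{odd} map, equivariant for the antipodal $\z2$-actions on source and target. Hence it suffices to show that no continuous odd map $S^{n+1}\to S^n$ exists. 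The case $n=0$ is immediate: an odd map $S^1\to S^0=\{\pm1\}$ would, by connectedness of $S^1$, be constant, contradicting $g(-p)=-g(p)$; so from now on assume $n\geq1$.

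Next I would pass to the quotients by the antipodal involutions. Since $g$ commutes with these involutions, it descends to a continuous map $\bar g\colon\R P^{n+1}\to\R P^n$ fitting into a commuting square whose vertical arrows are the two double covers $S^{n+1}\to\R P^{n+1}$ and $S^n\to\R P^n$. The cover $S^n\to\R P^n$ is classified by the generator $w\in H^1(\R P^n;\z2)$, so the pullback $\bar g^*w\in H^1(\R P^{n+1};\z2)$ classifies the double cover of $\R P^{n+1}$ obtained by pulling back $S^n\to\R P^n$ along $\bar g$.

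The crucial point is that, because $g$ furnishes an equivariant lift of $\bar g$, the connected space $S^{n+1}$ maps into this pulled-back cover over $\R P^{n+1}$; a connected total space forces the cover to be the nontrivial (connected) one, so $\bar g^*w$ equals the generator $\tilde w\in H^1(\R P^{n+1};\z2)$. I would then invoke the standard ring structure $H^*(\R P^k;\z2)\cong\z2[w]/(w^{k+1})$. In the target $w^{n+1}=0$, whereas in the source $\tilde w^{n+1}\neq0$. Applying the ring homomorphism $\bar g^*$ yields $0=\bar g^*(w^{n+1})=(\bar g^*w)^{n+1}=\tilde w^{n+1}\neq0$, a contradiction. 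Thus no continuous odd map $S^{n+1}\to S^n$ exists, and the theorem follows.

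The step I expect to be the main obstacle is precisely showing that $\bar g^*$ carries the generator of $H^1$ to the generator, equivalently that $\bar g$ is nontrivial on fundamental groups. This is where the oddness of $g$ is indispensable: without equivariance $\bar g$ could well be null-homotopic, and the cohomological contradiction would collapse. Everything else rests on the computation of the mod-$2$ cohomology ring of projective space, which I would take as known.
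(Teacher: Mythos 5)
Your argument is correct and complete: the reduction from the $\mathbb{R}^{n+1}$-valued statement to the nonexistence of an odd map $S^{n+1}\to S^n$ via $g(p)=(f(p)-f(-p))/\|f(p)-f(-p)\|$ is the standard one, the $n=0$ case is handled properly by connectedness, and the cohomological contradiction $0=\bar g^*(w^{n+1})=(\bar g^*w)^{n+1}\neq 0$ in $H^*(\mathbb{R}P^{n+1};\mathbb{Z}/2\mathbb{Z})$ is valid once you know $\bar g^*$ hits the generator of $H^1$; your justification of that step (the equivariant lift $g$ forces the pulled-back double cover to be connected, hence nontrivial) is exactly the right way to use oddness. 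There is, however, nothing in the paper to compare this against: the paper states the classical Borsuk--Ulam theorem without proof, citing Borsuk's original article, and immediately records two equivalent reformulations (the equivariant one, which is precisely your intermediate claim that no odd map $S^{n+1}\to S^n$ exists, and the join formulation $S^n*\mathbb{Z}/2\mathbb{Z}\to S^n$). The paper's actual content concerns noncommutative generalizations of the join formulation, proved by entirely different means (torsion characters, Mayer--Vietoris and Milnor idempotents in C*-algebraic K-theory), so your mod-$2$ cohomology argument is a self-contained proof of the classical input that the paper simply takes as known.
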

\begin{theorem}[equivariant formulation]
For any natural number $n$,
there  does \emph{not} exist a $\mathbb{Z}/2\mathbb{Z}$-equivariant
continuous  map $\widetilde{f}\colon S^{n+1}\to S^{n}$.
\end{theorem}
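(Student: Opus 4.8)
The plan is to pass to the quotients of the spheres by their antipodal involutions and to transform the non-existence statement into an impossibility in mod-$2$ cohomology. The antipodal $\z2$-action on $S^m$ is free, with orbit space the real projective space $\R P^m$ and orbit map the connected (hence nontrivial) double cover $\pi_m\colon S^m\to\R P^m$. A $\z2$-equivariant continuous map $\widetilde f\colon S^{n+1}\to S^n$ descends to a continuous map $f\colon\R P^{n+1}\to\R P^n$ satisfying $f\circ\pi_{n+1}=\pi_n\circ\widetilde f$, and I shall derive a contradiction from the existence of $f$ together with this compatibility. (The degenerate case $n=0$ is immediate, since the connected space $S^1$ admits no equivariant map to the discrete two-point space $S^0$; so assume $n\ge 1$.)

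First I would establish the only genuinely geometric input: that $f$ pulls the nontrivial double cover of $\R P^n$ back to the nontrivial double cover of $\R P^{n+1}$. Indeed, the assignment $p\mapsto(\pi_{n+1}(p),\widetilde f(p))$ defines a map from $S^{n+1}$ to the pulled-back cover $f^*\pi_n$ over $\R P^{n+1}$; it is a morphism of double covers and is bijective on each fibre (the two points $\pm p$ go to the two points $([p],\pm\widetilde f(p))$), so it identifies $\pi_{n+1}$ with $f^*\pi_n$. Since double covers of a space $X$ are classified by $H^1(X;\z2)$, and the generator $w_m$ of $H^1(\R P^m;\z2)$ corresponds to the cover $\pi_m$, this identification says exactly that the ring homomorphism $f^*$ sends $w_n$ to $w_{n+1}$.

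Next I would invoke the well-known structure $H^*(\R P^m;\z2)\cong\z2[w_m]/(w_m^{m+1})$, a truncated polynomial algebra on the degree-one class $w_m$. Because $f^*$ is multiplicative, $f^*(w_n^{\,j})=w_{n+1}^{\,j}$ for all $j$. The contradiction comes from comparing the top powers: in $H^*(\R P^n;\z2)$ one has $w_n^{\,n+1}=0$, simply because $\R P^n$ is $n$-dimensional so that $H^{n+1}(\R P^n;\z2)=0$. Applying $f^*$ yields $w_{n+1}^{\,n+1}=f^*(w_n^{\,n+1})=0$ in $H^{n+1}(\R P^{n+1};\z2)$. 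But $w_{n+1}^{\,n+1}$ is the generator of $H^{n+1}(\R P^{n+1};\z2)\cong\z2$ and is therefore nonzero, a contradiction. Hence no equivariant $\widetilde f$ exists.

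The step I expect to carry the real weight is not the final algebraic manipulation but the computation of the ring $H^*(\R P^m;\z2)$ itself---in particular the non-vanishing of $w_m^{\,m}$ in top degree---which encodes the essential topology; granted this standard fact, the rest is formal. A variant that sidesteps cohomology rings runs through mapping degrees: composing $\widetilde f$ with an equatorial inclusion $S^n\hookrightarrow S^{n+1}$ produces an odd self-map of $S^{n+1}$ which, missing the two poles, is null-homotopic and so has degree zero, whereas Borsuk's theorem that every odd self-map of a sphere has odd degree makes this impossible. I would nonetheless favour the cohomological argument, as it pins the entire obstruction onto the single inequality $w_{n+1}^{\,n+1}\neq 0$.
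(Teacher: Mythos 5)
Your argument is correct. Note that the paper does not prove this statement at all: it is quoted as one of three classical, mutually equivalent formulations of Borsuk's theorem (with a citation to the original 1933 paper) and serves only as motivation for the noncommutative conjectures, so there is no in-paper proof to compare against. Your cohomological proof --- descend to $f\colon\mathbb{R}P^{n+1}\to\mathbb{R}P^{n}$, show via the fibrewise bijection $p\mapsto(\pi_{n+1}(p),\widetilde f(p))$ that $f$ pulls the nontrivial double cover back to the nontrivial double cover, hence $f^{*}w_{n}=w_{n+1}$, and then compare $w_{n}^{\,n+1}=0$ with $w_{n+1}^{\,n+1}\neq 0$ in the truncated polynomial rings $H^{*}(\mathbb{R}P^{m};\mathbb{Z}/2\mathbb{Z})\cong(\mathbb{Z}/2\mathbb{Z})[w_{m}]/(w_{m}^{\,m+1})$ --- is the standard textbook argument for the equivariant formulation, and every step checks out, including the separate treatment of $n=0$ (where connectedness of $S^{1}$ already suffices) and the use of the classification of double covers by $H^{1}(-;\mathbb{Z}/2\mathbb{Z})$ together with its naturality under pullback. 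The only nontrivial external input is the ring structure of $H^{*}(\mathbb{R}P^{m};\mathbb{Z}/2\mathbb{Z})$, as you correctly identify. The degree-theoretic variant you sketch is also valid, but it leans on the harder fact that odd self-maps of spheres have odd degree, so the cohomological route is the more economical of the two.
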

\begin{theorem}[join formulation]
For any natural number $n$,
there  does \emph{not} exist a $\mathbb{Z}/2\mathbb{Z}$-equivariant
continuous  map $\widetilde{f}\colon S^{n}*\mathbb{Z}/2\mathbb{Z}\to S^{n}$.
\end{theorem}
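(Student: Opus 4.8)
The plan is to deduce the join formulation from the equivariant formulation already stated above, by exhibiting a $\mathbb{Z}/2\mathbb{Z}$-equivariant homeomorphism $S^n*\mathbb{Z}/2\mathbb{Z}\cong S^{n+1}$, where both spheres carry the antipodal action and $\mathbb{Z}/2\mathbb{Z}$ is viewed as the two-point space $S^0=\{-1,+1\}$ with the group acting by the flip. The starting observation is that the join of a space with $S^0$ is its unreduced suspension: two copies of the cone on $S^n$, one for each point of $S^0$, glued along their common base $S^n$. Since $\Sigma S^n\cong S^{n+1}$, this already identifies the underlying spaces; the content is to make the identification compatible with the two involutions.

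First I would write points of the join as equivalence classes $[x,\epsilon,t]$ with $x\in S^n$, $\epsilon\in\{-1,+1\}$ and $t\in[0,1]$, subject to $[x,\epsilon,0]$ being independent of $\epsilon$ and $[x,\epsilon,1]$ being independent of $x$, and define
\[
\Phi([x,\epsilon,t])=\bigl(\cos(\tfrac{\pi t}{2})\,x,\ \epsilon\sin(\tfrac{\pi t}{2})\bigr)\in\mathbb{R}^{n+1}\times\mathbb{R}.
\]
One checks that $\Phi$ is well defined: at $t=0$ the second coordinate vanishes, so the value is $(x,0)$ and does not depend on $\epsilon$, while at $t=1$ the first coordinate vanishes, so the value is $(0,\epsilon)$ and does not depend on $x$. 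Because $|x|=1$, the image lies on $S^{n+1}$, and $\Phi$ is a continuous bijection from a compact space to a Hausdorff space, hence a homeomorphism.

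Next I would verify equivariance. The diagonal action on the join sends $[x,\epsilon,t]$ to $[-x,-\epsilon,t]$ (antipodal on $S^n$, flip on $S^0$), and a direct substitution gives $\Phi([-x,-\epsilon,t])=-\Phi([x,\epsilon,t])$, so $\Phi$ intertwines the join action with the antipodal involution on $S^{n+1}$. Consequently, precomposition with $\Phi^{-1}$ sets up a bijection between $\mathbb{Z}/2\mathbb{Z}$-equivariant continuous maps $\widetilde{f}\colon S^n*\mathbb{Z}/2\mathbb{Z}\to S^n$ and $\mathbb{Z}/2\mathbb{Z}$-equivariant continuous maps $S^{n+1}\to S^n$. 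The latter do not exist by the equivariant formulation, so neither do the former, which is the claim.

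I do not expect a genuine obstacle here: the argument is an equivalence of formulations rather than a new theorem. The only points requiring care are bookkeeping, namely (i) confirming that the natural $\mathbb{Z}/2\mathbb{Z}$-action on $\mathbb{Z}/2\mathbb{Z}$ in the join is the flip, which under $\Phi$ becomes the sign change on the suspension coordinate, and (ii) checking that $\Phi$ respects the collapsing relations defining the join, so that it descends to a genuine homeomorphism. Once these are in place, the Borsuk--Ulam theorem in its join form is immediate from the equivariant form.
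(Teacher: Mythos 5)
Your proof is correct and matches the paper's (implicit) reasoning: the paper presents the join formulation as one of three equivalent phrasings of the classical Borsuk--Ulam theorem and offers no separate proof, the equivalence resting exactly on the standard equivariant identification $S^{n}*\mathbb{Z}/2\mathbb{Z}\cong\Sigma S^{n}\cong S^{n+1}$ that you construct explicitly. Your map $\Phi$, the well-definedness check at $t=0$ and $t=1$, and the equivariance computation are all correct, so nothing further is needed.
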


The third formulation of the theorem lends itself to a natural and wide generalization, which was
proposed in
\cite[Conjecture~2.2]{bdh15}(cf.\ \cite[Theorem~6.2.5]{m-j03}).
It is worth mentioning that \cite[Conjecture~2.2]{bdh15} without the assumption of the existence of non-trivial torsion,
or local triviality, implies a weaker version of the celebrated Hilbert-Smith conjecture (see \cite{cdt} for details), and remains wide open.
However, under the assumption that the group
is not torsion free, \cite[Conjecture~2.2]{bdh15} was recently proved by Passer:
\begin{theorem}[Theorem~2.8 in \cite{p-b16}]\label{CP1}
Let $X$ be a compact Hausdorff space equipped with a continuous free action of a
 compact Hausdorff
group~$G$ with a non-trivial torsion element. Then, for the diagonal action of $G$ on the join
$X*G$, there does \emph{not} exist a $G$-equivariant
continuous map $X*G\to X$.
\end{theorem}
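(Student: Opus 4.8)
The plan is to reduce the statement to a free action of a finite cyclic group and then to detect the obstruction with mod-$p$ equivariant cohomology. First I would extract from the non-trivial torsion element $g_0\in G$, of order $m$ say, a subgroup of prime order: choosing a prime $p\mid m$, the element $g_0^{m/p}$ generates a copy of $C:=\Z/p\Z\subseteq G$. Since the $G$-action on $X$ is free, its restriction to $C$ is free, and the subset $X*C:=\{(1-t)x\oplus th:h\in C\}\subseteq X*G$ is $C$-invariant for the diagonal action. Restricting a hypothetical $G$-equivariant map $X*G\to X$ to this subspace would yield a $C$-equivariant map $X*C\to X$. Hence it suffices to prove that, for $C=\Z/p\Z$ acting freely on a compact Hausdorff space $X$, there is no $C$-equivariant map $X*C\to X$.

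To detect the obstruction I would use the mod-$p$ index. Writing $M:=X/C$ for the compact quotient, the free action gives a classifying map $c_X\colon M\to BC$, and for the canonical generator $w\in H^*(BC;\mathbb{F}_p)$ (of degree $1$ if $p=2$ and degree $2$ otherwise) one sets $\mathrm{ind}_p(X):=\sup\{k:c_X^*(w^k)\neq0\}$. This index is monotone under $C$-equivariant maps: a $C$-map $Y\to X$ descends to a map over $BC$, so $c_X^*(w^k)\neq0$ whenever $c_Y^*(w^k)\neq0$, giving $\mathrm{ind}_p(Y)\le\mathrm{ind}_p(X)$. The geometric heart is the join increment $\mathrm{ind}_p(X*C)=\mathrm{ind}_p(X)+1$, which I would establish from the Mayer--Vietoris (Gysin) sequence of the join: passing to the diagonal quotient, $X*C$ sits over $M$ in a Gysin sequence whose connecting homomorphism is cup product with the Euler class of the associated line bundle, whose mod-$p$ reduction is $c_X^*(w)$, so that exactly one further power of $w$ survives. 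Feeding the hypothetical map $X*C\to X$ into monotonicity would then give $\mathrm{ind}_p(X)+1=\mathrm{ind}_p(X*C)\le\mathrm{ind}_p(X)$.

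The last inequality is absurd precisely when $\mathrm{ind}_p(X)<\infty$, and securing this finiteness is the main obstacle. It holds automatically when the quotient $M$ has finite cohomological dimension, but a compact Hausdorff space need not; moreover the iteration $f\mapsto f*\mathrm{id}_C$ turns a single map $X*C\to X$ into $C$-equivariant maps $C^{*k}\to X$ for every $k$ (compose the join $f*\mathrm{id}_C\colon X*C^{*2}\to X*C$ with $f$, iterate, and restrict to the subjoin $C^{*k}\subseteq X*C^{*k}$), which forces $\mathrm{ind}_p(X)=\infty$. Thus the infinite-dimensional case is exactly where a naive index count fails. To close this gap I would either approximate $M$ by finite-dimensional compacta and use the continuity of \v{C}ech cohomology to run the Euler-class computation at finite stages, or replace the numerical index by the exact Gysin sequence itself, deducing the contradiction from the behaviour of the connecting homomorphism rather than from subtracting two infinite indices; this is the step I expect to require the most care, and it is where the compactness hypothesis must do its real work.
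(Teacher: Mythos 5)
The paper does not actually prove this statement: it is quoted from Passer, and the only ``proof'' in the text is the citation of \cite[Theorem~2.8]{p-b16}. Passer's own argument is developed for free $\mathbb{Z}/k\mathbb{Z}$-actions on unital C*-algebras and then specialized, so your purely topological route is in any case different from the source; there is nothing in the present paper to compare it with step by step.

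On its own terms your outline is viable, but you have left the decisive step as a fork between two options, only one of which works, and you should commit to it. The first option --- continuity of \v{C}ech cohomology --- does close the gap, and more cleanly than you suggest: for a compact Hausdorff space $M$, $\check H^*(M;\mathbb{F}_p)$ is the direct limit of the cohomology of the nerves of finite open covers, compatibly with cup products; any class $w$ of positive degree is therefore pulled back from a finite (hence finite-dimensional) complex and satisfies $w^k=0$ for large $k$. Applied to the characteristic class of the free $C=\mathbb{Z}/p\mathbb{Z}$-action, this gives $\mathrm{ind}_p(X)<\infty$ for every compact free $C$-space outright. Combined with your own observation that a $C$-map $X*C\to X$ yields $C$-maps $C^{*k}\to X$ for all $k$ (you need $X\neq\emptyset$ to start the iteration with $g\mapsto x_0g$, but the empty case is vacuous), and with the fact that $\mathrm{ind}_p(C^{*k})\to\infty$ because $C^{*k}$ is $(k-2)$-connected, monotonicity alone yields the contradiction $\infty=\mathrm{ind}_p(X)<\infty$. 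In particular the Gysin/join-increment identity $\mathrm{ind}_p(X*C)=\mathrm{ind}_p(X)+1$, which you present as the geometric heart, is not needed at all and can be discarded together with the worry about ``subtracting two infinite indices.'' By contrast, your second option (``replace the numerical index by the exact Gysin sequence itself'') is not a proof strategy as stated: compactness must enter somewhere, since $S^\infty$ with the antipodal action \emph{does} admit a $\mathbb{Z}/2\mathbb{Z}$-equivariant map $S^\infty*\mathbb{Z}/2\mathbb{Z}\to S^\infty$ (both sides are models of $E\mathbb{Z}/2\mathbb{Z}$), and the Gysin sequence by itself does not distinguish that situation from yours. So: keep the reduction to prime order, keep monotonicity, prove finiteness of the index by the nerve/continuity argument, and derive the contradiction from the iterated joins.
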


Now recall that
 the only contractible compact Hausdorff group $G$ is the trivial group~\cite{h-b79}, and the existence of a $G$-equivariant
continuous map $G*G\to G$ is equivalent to the contractibility of~$G$ (e.g., see \cite{bhms07}). Furthermore,
by taking any $x_0\in X$, we can define a $G$-equivariant continuous map
$G\ni g\mapsto x_0g\in X$, which induces a $G$-equivariant continuous map~\mbox{$G*G\to X*G$}.
Consequently, we obtain:
\begin{theorem}\label{cor2}
Let $X$ be a compact Hausdorff space equipped with a continuous free action of a
 non-trivial compact Hausdorff
group~$G$. Then there does \emph{not} exist a $G$-equivariant
continuous map
\mbox{$
X*G\to G
$}.
\end{theorem}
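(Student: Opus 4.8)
The plan is to argue by contradiction, leveraging the two facts recalled immediately before the statement: that a $G$-equivariant continuous map $G*G\to G$ exists if and only if $G$ is contractible, and that the only contractible compact Hausdorff group is trivial. So I would suppose that a $G$-equivariant continuous map $\phi\colon X*G\to G$ exists, and aim to manufacture from it an equivariant map $G*G\to G$, thereby forcing $G$ to be contractible.

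First I would fix a base point $x_0\in X$ and form the orbit map $\iota\colon G\to X$, $g\mapsto x_0 g$. Associativity of the action gives $\iota(gh)=x_0(gh)=(x_0 g)h=\iota(g)h$, so $\iota$ is $G$-equivariant for right translation on the source and the given action on the target; this is precisely the map indicated in the paragraph preceding the statement.

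Next I would pass to joins. Functoriality of the join turns $\iota$, paired with the identity $\id\colon G\to G$, into a continuous map $\iota*\id\colon G*G\to X*G$; since both components are equivariant and the join carries the diagonal $G$-action, $\iota*\id$ is again $G$-equivariant. Composing, $\phi\circ(\iota*\id)\colon G*G\to G$ is a $G$-equivariant continuous map, so by the cited equivalence $G$ is contractible, and by Hofmann's theorem it is therefore trivial---contradicting the hypothesis that $G$ is non-trivial.

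Since the substantial analytic input is imported wholesale from the cited results, I do not expect a serious obstacle here. The only points requiring genuine verification are the equivariance and continuity of the induced join map $\iota*\id$, in particular at the gluing parameter; both follow at once from the explicit description of the join topology, so the argument is essentially a short chain of reductions.
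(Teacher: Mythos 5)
Your proposal is correct and follows essentially the same route as the paper: the paragraph preceding the statement is the paper's proof, namely composing the hypothetical equivariant map $X*G\to G$ with the join of the orbit map $g\mapsto x_0g$ to obtain an equivariant map $G*G\to G$, and then invoking the equivalence with contractibility and Hoffmann's theorem. (As in the paper, note that freeness of the action is not actually needed for this argument.)
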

\noindent
Note that by a similar argument, in case $G$ is not torsion free, Theorem~\ref{cor2} follows from
Theorem~\ref{CP1}.

If we assume even more about~$G$, notably that it is a non-trivial compact connected semisimple Lie group,
then Theorem~\ref{cor2} also
follows from:
\begin{theorem}\label{Kthm}
Let $G$ be a non-trivial compact connected semisimple Lie group.
Then there exists a finite-dimensional representation $V$ of $G$
such that for any compact Hausdorff space $X$  equipped with a
free $G$-action,  the associated vector bundle
$$
(X*G)\overset{G}{\times} V
$$
is \emph{not} stably trivial.
\end{theorem}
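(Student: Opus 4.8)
The plan is to reduce the statement to the universal case $X=G$, to identify the K-theory class of the associated bundle over $(G*G)/G$ with the $K^1$-class of a representation of $G$, and finally to force that class to be non-zero by detecting it with the Chern character in degree three. Throughout I take $X$ non-empty (otherwise the statement is vacuous), and I fix a point $x_0\in X$, which plays the role of the character of $A=C(X)$.

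First I would use $x_0$ to cut the problem down. The map $G\ni g\mapsto gx_0\in X$ is $G$-equivariant, so joining it with $\mathrm{id}_G$ yields a $G$-equivariant map $G*G\to X*G$, hence a map of orbit spaces $f\colon (G*G)/G\to (X*G)/G$ which is covered by a morphism of principal $G$-bundles. By functoriality of the associated-bundle construction, $f^*\big((X*G)\overset{G}{\times}V\big)\cong (G*G)\overset{G}{\times}V$. Since a pullback of a stably trivial bundle is stably trivial, it suffices to produce one representation $V$ for which $(G*G)\overset{G}{\times}V$ is not stably trivial; this then settles every $X$ at once.

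Next I would analyse the diagonal action on $G*G$. Normalising the first join coordinate to the identity identifies $(G*G)/G$ with the unreduced suspension $\Sigma G$, the two cone points being the images of the two ends of the join. Over the two contractible cones the bundle $(G*G)\overset{G}{\times}V$ is trivial, so its reduced class in $\tilde K^0(\Sigma G)\cong K^1(G)$ is given by a clutching function along the equatorial copy of $G$. A direct computation, comparing the trivialisation obtained by normalising the first join coordinate with the one obtained by normalising the second, shows that this clutching function is $s\mapsto \rho(s)^{-1}$, where $\rho\colon G\to U(V)$ is the representation defining $V$. Hence, under the suspension isomorphism $\tilde K^0(\Sigma G)\cong K^1(G)$, the reduced class of $(G*G)\overset{G}{\times}V$ is $\pm[\rho]$, and the bundle is stably trivial if and only if $[\rho]=0$ in $K^1(G)$.

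It remains to exhibit a representation with $[\rho]\neq0$, and here I would take the adjoint representation $V=\mathfrak g_{\mathbb C}$, which is defined on $G$ itself and is non-trivial because a non-trivial compact connected semisimple group is non-abelian. Since $G$ is semisimple, $\pi_1(G)$ is finite, so $H^1(G;\mathbb Q)=0$ and the first possibly non-zero odd rational cohomology lies in degree three, where $H^3(G;\mathbb Q)\neq0$ (of rank equal to the number of simple factors). I would detect $[\rho]$ by its Chern character: the degree-three component of $\mathrm{ch}[\rho]$ is, up to a non-zero universal scalar, the pullback $\rho^*$ of the canonical degree-three generator of $H^3(U(V);\mathbb Q)$, and this pullback is non-zero precisely because $\rho$ has positive Dynkin index on some simple factor. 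The adjoint representation has positive Dynkin index on each factor, so $[\mathrm{Ad}]\neq0$ in $K^1(G)\otimes\mathbb Q$; thus $(G*G)\overset{G}{\times}\mathfrak g_{\mathbb C}$ is not stably trivial, and pulling a putative stable trivialisation back along $f$ would contradict this, proving that $(X*G)\overset{G}{\times}\mathfrak g_{\mathbb C}$ is not stably trivial. The main obstacle is the clutching computation of the third paragraph together with the representation-theoretic input of the last one: one must pin down the identification $\tilde K^0(\Sigma G)\cong K^1(G)$ carefully (signs and orientations of the two cones) and invoke the classical fact---via the Chern character and Dynkin index, or equivalently Hodgkin's computation of $K^*(G)$---that every non-trivial compact connected semisimple Lie group carries a representation with non-vanishing $K^1$-class.
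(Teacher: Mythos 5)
Your proof is correct. Its first two steps --- the reduction to $X=G$ by means of a basepoint $x_0$ and the induced $G$-map $G*G\to X*G$, and the identification of the reduced class of $(G*G)\times^G V$ under $\tilde K^0(\Sigma G)\cong K^1(G)$ with (a sign times) the class of the defining representation --- are exactly those of the paper, which quotes Atiyah for the isomorphism $\beta$ rather than spelling out the clutching function. The genuine difference lies in the final non-vanishing argument. The paper starts from a fundamental representation $V$ of the simply connected cover $\widetilde G$, invokes Hodgkin's theorem that its class is non-zero in the torsion-free group $K^1(\widetilde G)$, and, because $V$ need not descend to $G=\widetilde G/\Gamma$, replaces it by $V^{\otimes d}$ with $d=|\Gamma|$, using the formula $\beta\big(\big[\dim V^{\otimes d}\big]-\big[E_{V^{\otimes d}}\big]\big)=d(\dim V)^{d-1}\beta\big(\big[\dim V\big]-\big[E_{V}\big]\big)$ together with torsion-freeness to keep the class non-zero. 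You instead take the complexified adjoint representation, which is already a representation of $G$ itself (indeed of $G/Z(G)$), and detect its $K^1$-class rationally via the degree-three component of the odd Chern character and the positivity of the Dynkin index of the adjoint representation on each simple factor. This buys a single uniform choice of representation with no case split on $\pi_1(G)$ and no tensor-power trick, and it requires only $H^3(G;\mathbb Q)$ and the Dynkin index rather than Hodgkin's full integral computation of $K^*(\widetilde G)$; the paper's route, in exchange, exhibits integrally primitive generators of $K^1$, which is more than the theorem needs. Your closing caveat about pinning down signs and orientations in the suspension isomorphism is immaterial here, since only the non-vanishing of the class is at stake.
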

\begin{proof}
To begin with, let us show that it suffices to prove the theorem in the special case~$X=G$.
Indeed, let $Y$ and $Z$ be compact Hausdorff $G$-spaces. Assume that the $G$-action on $Z$ is free,
and that there is a continuous $G$-equivariant map $f:Y\to Z$.
It follows that the $G$-action on $Y$ is  free, and that
$f$ induces a continuous map $\bar{f}:Y/G\to Z/G$ between quotient spaces. Now,
 for any finite-dimensional representation $G\to GL(V)$,  we obtain:
\[
\bar{f}^*\colon K^0(Z/G)\ni \Big[Z\overset{G}{\times} V\Big]\longmapsto \Big[Y\overset{G}{\times} V\Big]
\in K^0(Y/G).
\]
Hence, the stable non-triviality of the associated vector bundle $Y\times^G V$ implies
 the stable non-triviality of the associated vector bundle $Z\times^G V$.

Finally, since for any $x_0\in X$ the continuous $G$-equivariant map $G\ni g\mapsto x_0g\in X$
induces a continuous $G$-equivariant map $G*G\to X*G$, the above argument shows that it suffices
to prove the theorem~for $X=G$. Therefore, the following lemma concludes the proof. \ep

\begin{lemma}\label{lem:a-mf65}
Let  $G$ be a non-trivial compact connected semisimple Lie group.
Then there exists a representation of $G$ on a finite-dimensional vector space $V$ such that
$$
[(G*G)\times^G V]\ne[\dim V]\quad\text{in}\quad K^0(\SSS G).
$$
Here $\SSS G$ is the unreduced suspension of~$G$, which we identify with $(G*G)/G$.
\end{lemma}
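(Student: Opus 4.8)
The plan is to reduce the statement to a computation in the odd $K$-theory of $G$. Since $G$ is connected, $\SSS G$ is a connected suspension, and the clutching construction gives the standard isomorphism $\tilde K^0(\SSS G)\cong\tilde K^1(G)\cong[G,U]$, where $U=\varinjlim U(n)$; here one writes $\SSS G=C_+\cup C_-$ as a union of two contractible cones meeting along a copy of the equator $G$. I would first compute the clutching function of the principal $G$-bundle $G*G\to\SSS G$: trivializing $G*G$ over the lower cone by the first join coordinate and over the upper cone by the second, a direct calculation shows that the transition function over the equator is $u\mapsto u^{\mp1}$, so that the clutching function of this principal bundle is, up to inversion and homotopy, the identity map $G\to G$ (equivalently, $G*G\to\SSS G=\Sigma G$ is the bundle classified by the canonical counit map $\Sigma G\to BG$). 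Consequently the associated bundle $(G*G)\times^G V$ has clutching function $u\mapsto\rho(u)^{\mp1}$, where $\rho\colon G\to U(V)$ is the representation, so that under the identification above
\[
\big[(G*G)\times^G V\big]-[\dim V]=\pm[\rho]\in\tilde K^1(G),
\]
where $[\rho]$ denotes the class of $\rho\colon G\to U(V)\hookrightarrow U$. Thus it suffices to produce a representation $\rho$ with $[\rho]\ne 0$ in $K^1(G)$.

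To detect non-triviality I would pass to rational coefficients via the Chern character $\mathrm{ch}\colon K^1(G)\to H^{\mathrm{odd}}(G;\mathbb{Q})$ and show that its lowest component $\mathrm{ch}_3[\rho]\in H^3(G;\mathbb{Q})$ is non-zero; as $\mathrm{ch}$ is a homomorphism, a non-zero value forces $[\rho]\ne 0$ already integrally. The class $\mathrm{ch}_3[\rho]$ is, up to a non-zero constant, the pullback $\rho^*z_3$ of the canonical generator $z_3$ of $H^3(U;\mathbb{Q})\cong\mathbb{Q}$, and on bi-invariant forms this pullback is represented by the $3$-form $(X,Y,Z)\mapsto B_\rho(X,[Y,Z])$ on $\mathfrak{g}$, where $B_\rho(X,Y)=\operatorname{tr}(\rho_*(X)\rho_*(Y))$ is the trace form of $\rho$. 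Equivalently, $\mathrm{ch}_3[\rho]$ is the transgression of $\mathrm{ch}_2$ of the universal bundle for $\rho$, that is of $-c_2(\rho)\in H^4(BG;\mathbb{Q})$, since $c_1$ vanishes rationally for semisimple $G$.

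Finally I would take $\rho$ to be any faithful (or merely non-trivial) representation of $G$. Then the trace form $B_\rho$ is a non-zero $G$-invariant symmetric form on $\mathfrak{g}$, and because $[\mathfrak{g},\mathfrak{g}]=\mathfrak{g}$ for the semisimple Lie algebra $\mathfrak{g}$, the Cartan $3$-form $(X,Y,Z)\mapsto B_\rho(X,[Y,Z])$ is non-zero; hence $\rho^*z_3\ne0$ in $H^3(G;\mathbb{R})$ and $\mathrm{ch}_3[\rho]\ne0$. In the transgression picture, $B_\rho\ne0$ says $c_2(\rho)\ne0$ in $H^4(BG;\mathbb{Q})$, and for a compact connected semisimple $G$ one has $H^1(G;\mathbb{Q})=H^2(G;\mathbb{Q})=0$, so the degree-$3$ elements of $H^*(G;\mathbb{Q})$ are primitive and transgress isomorphically from $H^4(BG;\mathbb{Q})$; thus $c_2(\rho)\ne0$ again forces $\mathrm{ch}_3[\rho]\ne0$. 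Either way $[\rho]\ne0$, which by the first paragraph yields $[(G*G)\times^G V]\ne[\dim V]$ in $K^0(\SSS G)$.

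The main obstacle is the first step: correctly identifying the principal bundle $G*G\to\SSS G$ with the bundle classified by the canonical map $\Sigma G\to BG$ (equivalently, pinning down its clutching function as the identity of $G$) and tracking the representation through the clutching isomorphism $\tilde K^0(\SSS G)\cong\tilde K^1(G)$, including the harmless sign coming from inversion. Once this identification is secured, the remaining content is the comparatively routine fact that a non-trivial representation of a semisimple group has non-vanishing trace form, which couples with the bracket to detect $H^3(G)$.
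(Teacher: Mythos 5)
Your proof is correct, and its first half coincides with the paper's: both arguments identify $\tilde K^0(\SSS G)$ with $K^1(G)$ via the clutching/suspension isomorphism $\beta$ going back to Atiyah, under which $[\dim V]-[E_V]$ corresponds to the class of the map $\rho\colon G\to GL(V)$, so that everything reduces to exhibiting a finite-dimensional representation with non-trivial $K^1$-class. Where you genuinely diverge is in the non-vanishing step. The paper invokes Hodgkin's computation of $K^*(\widetilde G)$ for the simply connected cover --- the classes of the fundamental representations are exterior-algebra generators, hence nonzero, and $K^1(\widetilde G)$ is torsion-free --- and then descends to $G=\widetilde G/\Gamma$ by replacing $V$ with $V^{\otimes|\Gamma|}$ and using the multiplicativity formula for the $\beta$-class of a tensor power. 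You instead detect $[\rho]\ne0$ rationally: the degree-three component of the odd Chern character of $\rho$ is a nonzero multiple of the bi-invariant Cartan form $(X,Y,Z)\mapsto\operatorname{tr}\bigl(\rho_*(X)[\rho_*(Y),\rho_*(Z)]\bigr)$, which is nonzero for any non-trivial $\rho$ because the trace form is negative definite on a nonzero ideal complementary to $\ker\rho_*$ and that ideal equals its own derived algebra; since bi-invariant forms inject into $H^*(G;\mathbb{R})$ for compact connected $G$, the cohomology class is nonzero. This buys a cleaner statement (any non-trivial, e.g.\ faithful, representation works, with no case split on $\pi_1(G)$ and no appeal to torsion-freeness of $K^1$), at the price of using only rational information rather than the finer integral content of Hodgkin's theorem; for the purposes of this lemma the two are equally effective. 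The one step you flag as delicate --- the sign or inversion in the clutching function --- is harmless, since $[\rho^{-1}]=-[\rho]$ vanishes if and only if $[\rho]$ does.
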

\bp
Set $E_V:=(G*G)\times^G V$ for brevity. Consider the group
\[
\tilde K^0(\SSS G)=\ker\big(K^0(\SSS G)\longrightarrow K^0(\mathrm{pt})=\Z\big),
\]
where the homomorphism is defined by any point of the connected space $\SSS G$.
 As was already observed in~\cite{a-mf65}, using the description of
$K^1(G)$ in terms of homotopy classes of maps $G\to GL(\infty)$,
 it is not difficult to see that there is an isomorphism
\mbox{$\beta\colon\tilde K^0(\SSS G)\to K^1(G)$} such that
$\beta([\dim V]-[E_V])$ is exactly  $[u]\in K^1(G)$, where
\mbox{$u: G\to GL(V)$} is  the representation defining~$E_V$.

Now, if $G$ is simply connected, then for any fundamental representation $V$ we have
$\beta([\dim V]-[E_V])\neq0$
by~\cite{h-l67}. If $G$ is not simply connected, it is a quotient of its simply connected cover $\widetilde G$
by a finite central subgroup~$\Gamma$. Therefore, to get a desired
representation of the quotient $G=\widetilde G/\Gamma$,
it suffices to consider the tensor power $V^{\otimes d}$ for a fundamental representation
$V$ of $\widetilde G$,
where $d=|\Gamma|$. Then, as
$K^1(\widetilde G)$ has no torsion~\cite{h-l67}, the $K^1(\widetilde G)$-class
\[
\beta\big(\big[\dim V^{\otimes d}\big]-\big[E_{V^{\otimes d}}\big]\big)=
d(\dim V)^{d-1}\beta\big(\big[\dim V\big]-\big[E_{V}\big]\big)
\]
(see~\cite[Section~I.4]{h-l67}) is nonzero,  so 
$\beta\big(\big[\dim V^{\otimes d}\big]-\big[E_{V^{\otimes d}}\big]\big)$ is also nonzero when viewed as a class
 in $K^1(G)$.
\ep

\subsection{Quantum preliminaries}

In this section, we review basic noncommutative concepts used to formulate quantum Borsuk-Ulam-type
statements. We always assume that *-homomorphisms between unital C*-algebras  are  unital.

\subsubsection{Compact quantum groups \cite{w-sl98}}
Recall that a compact quantum semigroup $(H, \Delta)$
consists of a unital C*-algebra $H$ and a {\em coproduct}
$\Delta: H\to H\otimes_{\mathrm{min}} H$, i.e.,
a coassociative $*$-homomorphism. If the coproduct is injective and satisfies also
 the {\em cancellation laws} 
\[\label{canc}
\{\Delta (g)(1 \otimes h)\, |\, g, h \in H\}^{cls} =
H \otimes_{\mathrm{min}} H =
\{(g \otimes 1)\Delta (h)\, |\, g, h \in H\}^{cls},
\]
we call $(H, \Delta)$ a compact quantum group.
Here \emph{cls} stands for ``closed linear span''. 
If $H$ is commutative, then $H=C(G)$ and $\Delta(f)(g_1,g_2)=f(g_1g_2)$,
where $G$ is a compact Hausdorff topological group. Note that in this case the injectivity of the coproduct follows
from the cancellation properties.

\subsubsection{Free actions \cite{p-p95,e-da00}} \label{ssec:actions}
Let $A$ be a unital C$^*$-algebra, and let $(H, \Delta)$ be a compact quantum group.
We say that the quantum group $(H, \Delta)$ \emph{acts} on $A$ iff
we are given an injective  $*$-homomorphism
 $\delta  : A \to A \otimes_{\mathrm{min}} H$ (called a \emph{coaction}) satisfying:
\begin{enumerate}
\item
$(\delta  \otimes \id) \circ \delta  = (\id \otimes \Delta) \circ \delta$  (coassociativity),
\item
$\{\delta (a)(1 \otimes h) \,|\, a \in A, h \in H\}^{cls} = A \otimes_{\mathrm{min}} H$ (counitality).
\end{enumerate}
We call such an action (coaction) \emph{free} iff
$\{(x \otimes 1)\delta (y)\, |\, x, y \in A\}^{cls} = A \otimes_{\mathrm{min}} H$.
If $A$ and $H$ are commutative, then $\delta(f)(x,g)=f(xg)$, where $X\times G\ni (x,g)\mapsto xg\in X$
is a free continuous action of a compact Hausdorff group $G$ on a compact Hausdorff space~$X$.
Note that in this case the injectivity of the coaction is equivalent to its counitality for all continuous (not necessarily free) actions
$X\times G\to X$.

\subsubsection{Equivariant join \cite{dhh15,bdh16}}
There is an obvious definition of the join of unital \mbox{C*-algebras}
\begin{equation}\label{stan}
A {\circledast } B :=
\left\{f\in C\big([0,1],A \underset{\mathrm{min}}{\otimes} B\big) \,
\Big|\, f(0) \in \mathbb{C}\otimes B,\; f(1)\in A\ot \mathbb{C}\right\}
\end{equation}
dualizing the join of compact Hausdorff spaces: $C(X)\circledast C(Y)=C(X*Y)$.
In particular, we can take $B=H$, where $(H,\Delta)$ is a compact quantum group acting on~$A$.

However, to make this construction equivariant as in the classical case, we need to modify the above
definition to become:
\begin{definition}\label{gjoin}
Let $(H,\Delta)$ be a compact quantum group acting on a unital \mbox{C$^*$-algebra~$A$}
 via $\delta:A\rightarrow A\otimes_{\mathrm{min}}H$.
We call the unital C*-algebra
\begin{equation}\label{equiv}
A \overset{\delta}{\circledast } H :=
\left\{f\in C\big([0,1],A \underset{\mathrm{min}}{\otimes} H\big) \,
\Big|\, f(0) \in \mathbb{C}\otimes H,\; f(1)\in\delta(A)\right\}
\end{equation}
the \emph{equivariant noncommutative join} of $A$ and $H$.
\end{definition}
\noindent
The coaction
\[
(\id\otimes\Delta)\circ\;\colon C\big([0,1],A \underset{\mathrm{min}}{\otimes} H\big)\longrightarrow
C\big([0,1],A \underset{\mathrm{min}}{\otimes} H\big)\underset{\mathrm{min}}{\otimes} H
\]
induces a free action of $(H,\Delta)$ on $A\circledast^\delta H$.

\begin{remark}\label{remark}
It is important to note that, when $H$ is commutative, the standard join C*-algebra~\eqref{stan} with the diagonal coaction
(which is an algebra homomorphism due to the commutativity of~$H$) is equivariantly isomorphic to the equivariant join C*-algebra~\eqref{equiv}
(see \cite[Section~1.3]{hnpsz18}).
\end{remark}

\subsubsection{The Peter-Weyl subalgebra and associated modules \cite{bdh16}}

Let $A$ be a unital C*-algebra with a free action of a compact quantum group $(H,\Delta)$ given
by a coaction \mbox{$\delta: A\to A\otimes_{\rm min}H$}.
We define the
\emph{Peter-Weyl subalgebra} of $A$  as
\[
\cP_H(A):=\{\,a\in A\;| \;\delta(a)\in A\otimes\cO (H)\,\}.
\]
Here $\Pol(H)$ is the Hopf algebra
spanned by the matrix coefficients of irreducible unitary corepresentations
of~$H$.
Using the coassociativity of $\delta$, one can  check that $\cP_H(A)$ is a right $\cO (H)$-comodule algebra.
For $(A,\delta)=(H,\Delta)$ we have $\P_H(H)=\cO(H)$.

Now we can define the cotensor product
\[\label{pwcot}
\mathcal{P}_H(A)\Box V:=\{t\in \mathcal{P}_H(A)\otimes V\;|\;(\delta\otimes\id)(t)=
(\id\otimes{}_V\Delta)(t)\},
\]
where ${}_V\Delta\colon V\to \mathcal{O}(H)\otimes V$ is a left coaction rendering
$V$ a left $\mathcal{O}(H)$-comodule. We know that, if $\dim V<\infty$,
then the cotensor product $\mathcal{P}_H(A)\Box V$ is finitely generated projective as a left
module over the fixed-point subalgebra $A^{\mathrm{co}H}:=\{a\in A\;|\;\delta(a)=a\otimes 1\}$.

Next, we unravel  the meaning of the associated module $\mathcal{P}_H(A)\Box V$  for commutative \mbox{C*-algebras}
$A=C(X)$ and $H=C(G)$ (the classical setting). Let
$\varrho\colon G\to GL(V)$ be a  representation of $G$ on a finite-dimensional vector space $V$.
We view
$V$ as a left $\Pol(C(G))$-comodule via the coaction ${}_V\Delta\colon V\to \Pol(C(G))\otimes V$
given by the formula
$({}_V\Delta(v))(g)=\varrho(g^{-1})v$.
Note that the $C(X/G)$-module $C_G(X,V)$ of all continuous \mbox{$G$-equivariant} functions from $X$ to $V$
is naturally isomorphic with the $C(X/G)$-module $\Gamma(X\times^GV)$ of all continuous sections
of~$X\times^GV$. Here $G$-equivariance means
\[
\forall\;x\in X,\, g\in G:\;f(xg)=\varrho(g^{-1})(f(x)).
\]
Combining this with \cite[Corollary~4.1]{bdh16}, we can naturally identify
$\mathcal{P}_{C(G)}(C(X))\Box V$ with $\Gamma(X\times^GV)$ as left $C(X/G)$-modules.

\subsubsection{Induced actions of compact quantum subgroups}\label{two}
Given a compact quantum group $(H,\Delta)$, by a \emph{compact (closed) quantum subgroup} we mean a compact quantum group
$(K,\Delta)$ together with a surjective  $*$-homomorphism $\pi\colon H\to K$ respecting the coproducts.

\begin{remark}\label{cancel}
Note that the cancellation properties \eqref{canc} for $(K,\Delta)$ follow from 
the surjectivity of $\pi\colon H\to K$ combined with the cancellation properties  for $(H,\Delta)$. 
In other words, if a closed quantum subsemigroup of  a compact quantum group has injective coproduct, it is
a compact quantum group.
\end{remark}

The *-homomorphism~$\pi$ defines by restriction-corestriction a homomorphism\linebreak
 \mbox{$\Pol(H)\to\Pol(K)$} of Hopf $*$-algebras. It is always surjective as otherwise there would be a non-zero element of $\Pol(K)$ orthogonal to
$\pi(\Pol(H))$, which contradicts the density of $\pi(\Pol(H))$ in~$K$.
On the other hand, the converse is not always true, that is, a surjective homomorphism $\Pol(H)\to\Pol(K)$ of
Hopf $*$-algebras not always can be extended to a homomorphism $H\to K$ of C$^*$-algebras. (For instance,
the counit of the group-ring $*$-Hopf algebra $\mathbb{C}F_2$, where $F_2$ is the free group on two generators,
cannot be extended to a $*$-homomorphism $C^*_rF_2\to\mathbb{C}$, where $C^*_rF_2$ is the reduced convolution
C$^*$-algebra of~$F_2$.)
However, such an extension exists, for example,
if $H$ coincides with its universal form 
defined as the enveloping C*-algebra of~$\Pol(H)$.

Assume we are given a coaction $\delta\colon A \to A \otimes_{\mathrm{min}} H$. Then the map
\[
\delta':=(\id\otimes\pi)\circ\delta: A \longrightarrow A \otimes_{\mathrm{min}} K
\]
satisfies the coassociativity and counitality conditions ((1) and (2) in \ref{ssec:actions}), but it may fail to be injective.
To remedy this problem, take $\overline{A}:=A/\ker\delta'$. Then $\delta'$ induces an injective map
$
\overline{\delta}: \overline{A} \rightarrow \overline{A} \otimes_{\mathrm{min}} K
$
defining an action of $(K,\Delta)$ on $\overline{A}$. Indeed, suppose $\ker\overline{\delta}$ 
contains the class  of an element $a\in A$. Then, using the commutativity of the diagram
\[
\xymatrix{
A \ar[r]^{\delta'\ \ \ \ \ }\ar[d] & A \otimes_{\mathrm{min}} K\ar[d]\\
\overline{A}\ar[r]_{\overline{\delta}\ \ \ \ \ } & \overline{A}\otimes_{\mathrm{min}} K,
}
\]
for any bounded linear functional $\omega$ on $K$, we get 
\[
(\id\otimes\omega)(\delta'(a))\in\ker(A\longrightarrow\overline{A})=\ker\delta'.
\]
Since $\omega$ is arbitrary, we conclude that $(\delta'\ot\id)(\delta'(a))=0$. 
Now, taking advantage of the coassociativity of $\delta'$ and the injectivity of $\Delta$, we infer that
$\delta'(a)=0$, so the class of $a$ in~$\overline{A}$ is zero. 
This proves the injectivity of $\overline{\delta}$. Other desired properties of $\overline{\delta}$ are immediate.
We say that the coaction $\overline{\delta}: \overline{A} \rightarrow \overline{A} \otimes_{\mathrm{min}} K$ is \emph{induced by the coaction} 
$\delta\colon A \to A \otimes_{\mathrm{min}} H$.

It is straightforward to verify that the construction of induced actions of closed quantum subgroups is functorial:
Let $A$, $B$ and $C$ be unital C*-algebras equipped with an action of a compact quantum group $(H,\Delta)$. If $f\colon A\to B$
and $g\colon B\to C$ are equvariant \mbox{*-homomorphisms}, then, for any closed quantum subgroup $(K,\Delta)$, we obtain the natural
induced equivariant maps \mbox{$\overline{f}\colon \overline{A}\to \overline{B}$}
and $\overline{g}\colon \overline{B}\to \overline{C}$ such that $\overline{g}\circ\overline{f}$ is the induced map 
$\overline{g\circ f}\colon \overline{A}\to \overline{C}$.

Let us finally note that if $K$ is commutative, then  $\overline{A}=A$. The reason is that in this case we always
have the counit map $\eps\colon K\to\mathbb{C}$, which satisfies  $(\id\ot\eps)\circ\delta'=\id$,  proving that $\delta'$ is injective.

\section{The noncommutative Borsuk-Ulam conjecture of type I}
\noindent
\begin{conjecture}[\cite{bdh15}]\label{mc}
Let $A$ be a unital  C*-algebra equipped with
a free action  
of a non-trivial compact quantum group~$(H,\Delta)$, and let
$A\circledast^\delta H$ be the equivariant noncommutative join C*-algebra of $A$
 and $H$ with the induced free action of~$(H,\Delta)$.
Then
\begin{equation*}
\text{\large\boldmath$\not\!\exists$}\;\;\text{$(H,\Delta)$-equivariant $*$-homomorphism}\; A\longrightarrow
A\circledast^\delta H.
\end{equation*}
\end{conjecture}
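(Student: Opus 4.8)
The plan is to argue by contradiction, exploiting that this conjecture dualizes Passer's Theorem~\ref{CP1} and specializes, for $H=C(\Z/2\Z)$ acting antipodally, to the equivariant formulation of Borsuk--Ulam. Suppose there were an $(H,\Delta)$-equivariant $*$-homomorphism $\phi\colon A\to A\circledast^\delta H$. Its two endpoint evaluations are again equivariant: $\ev_0\circ\phi\colon A\to\mathbb{C}\otimes H\cong H$ intertwines $\delta$ with $\Delta$, while $\ev_1\circ\phi\colon A\to\delta(A)\cong A$ intertwines $\delta$ with itself (the latter by coassociativity of~$\delta$). The homomorphism $\phi$ is then a path inside $C\big([0,1],A\otimes_{\mathrm{min}}H\big)$ joining these two ends, so the strategy is to attach to the ends a homotopy-stable equivariant invariant that $\phi$ forces to agree, yet which must differ because one end lands in $H$ and the other in $A$.

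The one route known to produce such an invariant for a genuinely noncommutative $H$ is to transport the problem to a classical situation. Suppose $H$ admits a non-trivial torsion character, that is, a character $\chi\colon H\to\mathbb{C}$ of finite order~$N$; then $\chi$ generates a surjection $H\to C(\Z/N\Z)$ onto a classical cyclic quantum subgroup. Using the induced-action construction of Section~\ref{two} (which, since $\Z/N\Z$ is commutative, leaves the algebras unchanged, $\overline{A}=A$) together with the functorial map of joins along this surjection, one checks that $\phi$ descends to a $\Z/N\Z$-equivariant $*$-homomorphism $A\to A\circledast^{\overline{\delta}}C(\Z/N\Z)$. By Remark~\ref{remark} this equivariant join agrees with the standard one, so dually one obtains a $\Z/N\Z$-equivariant continuous map $X*\Z/N\Z\to X$. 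Since the finite cyclic group $\Z/N\Z$ carries a non-trivial torsion element, such a map is excluded by Theorem~\ref{CP1}. This settles the conjecture whenever $(H,\Delta)$ possesses a non-trivial torsion character.

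The main obstacle is that this reduction is simply unavailable for an arbitrary non-trivial compact quantum group: such an $H$ need not possess any torsion character, nor even a non-trivial classical point, so there is no finite cyclic subgroup onto which to restrict and no classical statement like Theorem~\ref{CP1} to invoke. What one would genuinely need is a purely operator-algebraic substitute for the topological content of Theorem~\ref{CP1} --- an equivariant invariant, defined for every free coaction, that is monotone under equivariant $*$-homomorphisms and strictly increases upon forming the join with~$H$. For particular families a $K_1$-theoretic invariant can play this role, exactly as the suspension computation of Lemma~\ref{lem:a-mf65} does for semisimple Lie groups; but no such invariant is known for general~$H$. I therefore expect the honest outcome to be two \emph{conditional} theorems --- the torsion-character case of type~I and a $K_1$-class strengthening of type~II --- with the conjecture as stated, for all non-trivial $(H,\Delta)$, remaining open.
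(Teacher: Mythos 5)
Your overall reading of the situation is exactly right, and your reduction coincides with the paper's: the paper does not prove Conjecture~\ref{mc} in general, but establishes precisely the two conditional results you predict --- Theorem~\ref{mainthm} (type~I under a non-trivial torsion character) and Theorem~\ref{thm2} (a $K_1$-theoretic strengthening of type~II) --- and your mechanism for the torsion case is the paper's Lemma~\ref{l1}: pass to the finite cyclic classical subgroup $X=\{\eps,\chi,\dots,\chi^{*(N-1)}\}$ of the maximal classical subgroup $G_H$, use the induced-action machinery of Section~\ref{two} (with $\overline{A}=A$ because $C(X)$ is commutative), and push an alleged equivariant map $A\to A\circledast^\delta H$ down to a $C(X)$-equivariant map $A\to A\circledast^{\overline{\delta}}C(X)$ via the functorial join morphism. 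The paper additionally proves an equivalence of \eqref{main} with a cone-retraction statement \eqref{maincone} whenever $H$ has any character, which you do not need for the torsion case but which drives its Corollary~\ref{coneB} and Theorem~\ref{k-contractibility}.

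There is, however, one genuine gap in your last step. After reducing to $\Z/N\Z$, you write that ``dually one obtains a $\Z/N\Z$-equivariant continuous map $X*\Z/N\Z\to X$'' and invoke the topological Theorem~\ref{CP1}. This dualization presupposes $A=C(X)$ commutative, but $A$ is an arbitrary unital C*-algebra carrying a free $\Z/N\Z$-action --- Gelfand duality is simply unavailable, and Remark~\ref{remark} only identifies the equivariant join with the standard join when $H$ (here $C(\Z/N\Z)$) is commutative; it says nothing about~$A$. The paper closes this gap by citing not Passer's topological Theorem~2.8 but his C*-algebraic \cite[Corollary~2.4]{p-b16}, which is itself a noncommutative Borsuk--Ulam statement: for a free action of a non-trivial finite cyclic group on a unital C*-algebra $A$, there is no equivariant $*$-homomorphism $A\to A\circledast C(\Z/N\Z)$. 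With that substitution (Remark~\ref{remark} serving only to match the equivariant join used here with the standard join appearing in Passer's result), your argument becomes exactly the paper's proof of Theorem~\ref{mainthm}; note also that the paper records further unconditional instances outside your torsion framework, namely $(A,\delta)=(H,\Delta)$ for non-trivial finite quantum groups (Theorem~\ref{k-contractibility}, which can have no non-trivial character at all) and for $C^*_r(F_n)$, $n>1$ (Theorem~\ref{k-contractibility2}, where $H$ is simple), the latter obtained through the $K_1$-invariant route you correctly anticipate.
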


It turns out that whether or not the C*-algebras $A$ and $H$ admit a character has crucial consequences for the conjecture.
Therefore, let us consider the following four cases:
\begin{enumerate}
\item
\emph{$A$ admits a character and $H$ does not admit a character:}
We believe that in this case it is impossible to have a free action of the compact quantum group $(H,\Delta)$ on~$A$.
\item
\emph{$A$ does not admit a character and $H$ admits a character:}
If there exists a \mbox{$*$-ho}\-mo\-mor\-phism $A\rightarrow A\circledast^\delta H$, then we can follow it by the evaluation at $0$
composed with a character on~$H$, which yields a character on~$A$. Hence, in this case, the above conjecture holds for a trivial reason.
(Note that it holds even for the trivial group.) Obvious examples are provided by: the irrational rotation C*-algebra of a noncommutative
torus equipped with the standard free action of the two-torus, the matrix algebra $M_n(\mathbb C)$, $n>1$, equipped with a free
$\mathbb{Z}/n\mathbb{Z}$-action, the Cuntz algebras $\mathcal{O}_n$, $n>1$, with the gauge action of $U(1)$.
\item
\emph{$A$ does not admit a character and $H$ does not admit a character:}
In this case, we can prove the conjecture for $A=H=C^*_rF_n$, where $F_n$ is the free group generated by $n$-elements with $n>1$
(see Theorem~\ref{k-contractibility2}).
\item
\emph{$A$ admits a character and $H$ admits a character:}
This case is the mainstream of current efforts. It contains the torsion case (section below), and the case of finite quantum groups
with $A=H$
(see Theorem~\ref{k-contractibility}).
\end{enumerate}

\subsection{The torsion case}

Recall that, for any compact quantum group $(H,\Delta)$ admitting characters, their convolution product
is given by $\chi_1*\chi_2:=(\chi_1\otimes\chi_2)\circ\Delta$. The map $\chi_1*\chi_2\colon H\to\mathbb{C}$
 is again a character, and the convolution product
is associative due to the coassociativity of the coproduct~$\Delta$. The neutral element for the convolution product is the counit.
As explained in the next paragraph, its existence follows from the existence of a character.
In the commutative case $H=C(G)$, where $G$ is a compact Hausdorff topological group, the characters of $H$ are identified
with points in $G$ via the Gelfand transform, and thus their convolution product can be identified with the group multiplication.

Next, note that, if the set $X_B$ of all characters of a unital C*-algebra $B$ is non-empty, then we can
topologize $X_B$ by the weak*-topology turining it into a compact Hausdorff space, 
and obtain a surjective \mbox{*-ho}\-mo\-mor\-phism $B\to C(X_B)$.
If $B=H$ is the \mbox{C*-algebra} of a compact quantum group, then $X_H$ becomes a compact Hausdorff topological semigroup
with respect to the convolution product. The induced coproduct on $C(X_H)$ makes $(C(X_H),\Delta)$ a closed subsemigroup of~$(H,\Delta)$.
Hence, by Remark~\ref{cancel}, $G_H:=X_H$ is a compact Hausdorff topological semigroup with cancellation properties, so by \cite{h-kh96}
it is a compact Hausdorff group. We call it the \emph{maximal classical subgroup}
of~$(H,\Delta)$.  The counit of $(H,\Delta)$ is the composition of 
\mbox{$H\to C(G_H)$} with the evaluation at the neutral element $\mathrm{ev}_e\colon C(G_H)\to\mathbb{C}$.

\begin{theorem}\label{mainthm}
Let $A$ be a unital  C*-algebra with
a free action
of a  compact quantum group~$(H,\Delta)$, and let
$A\circledast^\delta H$ be the equivariant noncommutative join C*-algebra of $A$
 and $H$ with the induced free action of~$(H,\Delta)$.
Then, if $H$ admits a character $\chi$,
 the following statements are equivalent:
\begin{gather}\label{main}
\text{\large\boldmath$\not\!\exists$}\;\;\text{$(H,\Delta)$-equivariant $*$-homomorphism}\; A\longrightarrow
A\circledast^\delta H,\\
\label{maincone}
\text{\large\boldmath$\not\!\exists$}\;\text{$*$-homomorphism}\; \gamma:A\longrightarrow
\CC A \;
\text{such that $\ev_1\circ\gamma\colon A\to A$ is $H$-colinear}.
\end{gather}
Here $\CC A:=A\circledast \mathbb{C}$ is the cone of~$A$, and $\ev_1$ is the evaluation at~1.
Moreover, these statements are true if $\chi$ is different from the counit but its finite convolution power is the counit.
\end{theorem}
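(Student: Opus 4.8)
The plan is to prove the two halves separately, and the key observation for the equivalence is that both \eqref{main} and \eqref{maincone} encode the \emph{same} datum: a norm-continuous path of unital $*$-endomorphisms of $A$. Writing $\Phi_t:=\ev_t\circ\Phi$ for an $(H,\Delta)$-equivariant $*$-homomorphism $\Phi\colon A\to A\circledast^\delta H$, I would first record that equivariance amounts to $(\id_A\otimes\Delta)\circ\Phi_t=(\Phi_t\otimes\id)\circ\delta$ for every $t$. Applying the slice map $\id_A\otimes\varepsilon\otimes\id_H$ to this identity and using the counit axiom $(\varepsilon\otimes\id)\Delta=\id$ (the counit $\varepsilon$ of $H$ exists precisely because $H$ admits a character) yields $\Phi_t=(\gamma_t\otimes\id)\circ\delta$ with $\gamma_t:=(\id_A\otimes\varepsilon)\circ\Phi_t\colon A\to A$. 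The boundary conditions then translate cleanly: $\Phi_0\in\mathbb{C}\otimes H$ forces $\gamma_0$ to be a character of $A$, while $\Phi_1\in\delta(A)$ together with injectivity of $\delta$ forces $(\gamma_1\otimes\id)\delta=\delta\gamma_1$, i.e.\ $\gamma_1$ is $H$-colinear. Conversely, starting from a $*$-homomorphism $\gamma\colon A\to\CC A$ (which automatically has $\gamma_0=\ev_0\circ\gamma$ a character, since $\gamma(a)(0)\in\mathbb{C}$) whose endpoint $\gamma_1=\ev_1\circ\gamma$ is $H$-colinear, I would set $\Phi_t:=(\gamma_t\otimes\id)\circ\delta$ and verify, using only coassociativity of $\delta$, that $\Phi$ lands in $A\circledast^\delta H$ and is equivariant. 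This gives a bijection between equivariant $*$-homomorphisms $A\to A\circledast^\delta H$ and $*$-homomorphisms $\gamma\colon A\to\CC A$ with $\ev_1\circ\gamma$ being $H$-colinear, so one exists iff the other does, which is exactly the equivalence of \eqref{main} and \eqref{maincone}. The matching of continuity in $t$ is routine, as both slice maps are norm-contractive, and freeness is not needed here.

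For the final assertion I would argue by contradiction. Assume $\chi\neq\varepsilon$ with $\chi^{*m}=\varepsilon$, taking $m>1$ minimal, and suppose an equivariant $\Phi$ exists. By the equivalence it produces a path $(\gamma_t)$ with $\kappa:=\gamma_0$ a character of $A$ and $\phi:=\gamma_1$ being $H$-colinear. The character $\chi$ corresponds to a point $g_\chi$ of order $m$ in the maximal classical subgroup $G_H$, hence to a surjection $\pi\colon H\to C(\mathbb{Z}/m\mathbb{Z})$; since $C(\mathbb{Z}/m\mathbb{Z})$ is commutative, the induced coaction $\bar\delta:=(\id\otimes\pi)\delta$ acts on $\overline A=A$, its generator being the order-$m$ $*$-automorphism $T:=(\id_A\otimes\chi)\circ\delta$, and colinearity of $\phi$ for $H$ restricts to $T\phi=\phi T$.

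The crucial point is that the resulting $\mathbb{Z}/m\mathbb{Z}$-action on the compact Hausdorff space $X_A$ of characters of $A$ is \emph{free}. To see this I would slice the freeness identity $\overline{\{(x\otimes1)\delta(y)\mid x,y\in A\}}=A\otimes_{\mathrm{min}}H$ by $\kappa\otimes\id$, concluding that $E:=(\kappa\otimes\id)\circ\delta\colon A\to H$ has dense range; being a $*$-homomorphism, its range is closed, so $E$ is surjective. Now any coincidence $\kappa\circ T^{\,j}=\kappa$ with $0<j<m$ reads $\chi^{*j}\circ E=\varepsilon\circ E$ on $A$ (using $T^{\,j}=(\id\otimes\chi^{*j})\delta$ and $\kappa=(\kappa\otimes\varepsilon)\delta$), whence $\chi^{*j}=\varepsilon$ by surjectivity of $E$, contradicting minimality of $m$. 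Thus every character of $A$ has a free $\mathbb{Z}/m\mathbb{Z}$-orbit.

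Finally I would transport the path to $X_A$: each $\gamma_t$ descends to the universal commutative quotient and dualizes to a continuous self-map $\hat\gamma_t\colon X_A\to X_A$, giving a homotopy from the constant map at $\kappa$ (as $\gamma_0$ factors through $\mathbb{C}$) to the $\mathbb{Z}/m\mathbb{Z}$-equivariant map $\hat\phi$ (as $T\phi=\phi T$). Feeding this back through the equivalence, now applied to the commutative pair $\big(C(X_A),C(\mathbb{Z}/m\mathbb{Z})\big)$ with its free action, produces a $\mathbb{Z}/m\mathbb{Z}$-equivariant continuous map $X_A*\mathbb{Z}/m\mathbb{Z}\to X_A$; since $\mathbb{Z}/m\mathbb{Z}$ has nontrivial torsion and acts freely on the compact Hausdorff space $X_A$, this contradicts Theorem~\ref{CP1}. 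The step I expect to be most delicate is exactly this passage to the character space: one must show that freeness of the quantum $H$-action descends to genuine freeness of the classical $\mathbb{Z}/m\mathbb{Z}$-action on $X_A$ (secured above by the surjectivity of $E$), since without it Theorem~\ref{CP1} cannot be invoked; the equivalence of \eqref{main} and \eqref{maincone} is, by comparison, essentially formal once the counit is available.
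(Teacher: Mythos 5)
Your proposal is correct, and its two halves relate quite differently to the paper's proof. The equivalence of \eqref{main} and \eqref{maincone} is established essentially as in the paper: you slice an equivariant $\Phi$ with the bounded counit (whose existence on all of $H$ is exactly what the character hypothesis buys) to produce $\gamma$, and compose $\gamma$ with $\delta$ to go back; your packaging of this as a bijection between the two sets of morphisms is a slightly sharper statement of the paper's Lemmas~\ref{l3a} and~\ref{l3b}, but the computations coincide. The torsion case, however, you prove by a genuinely different route. The paper uses the subgroup-reduction Lemma~\ref{l1} to pass to the induced action of the finite cyclic group $\{\varepsilon,\chi,\dots,\chi^{*(m-1)}\}$ on the (still possibly noncommutative) C*-algebra $A$ itself, and then invokes Passer's noncommutative result \cite[Corollary~2.4]{p-b16} on free $\Z/m\Z$-actions on unital C*-algebras. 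You instead exploit the fact that a hypothetical $\gamma$ hands you a character $\kappa=\ev_0\circ\gamma$, so the character space $X_A$ is non-empty, and you descend everything to $X_A$: surjectivity of $E_\omega=(\omega\otimes\id)\circ\delta$ (which does follow from freeness by the slicing you describe, since the range of a $*$-homomorphism is closed) gives freeness of the induced $\Z/m\Z$-action on $X_A$, the homotopy $\hat\gamma_t$ runs from a constant map to an equivariant self-map, and the classical Theorem~\ref{CP1} yields the contradiction. This buys independence from Passer's noncommutative theorem at the cost of only its classical instance, which is a genuine economy made possible precisely because the hypothesis to be refuted forces $A$ to admit a character. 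The one caveat is presentational: your freeness argument is written only for the single character $\kappa$, whereas Theorem~\ref{CP1} needs freeness at \emph{every} point of $X_A$; the identical argument applies because $(\omega\otimes\id)\circ\delta$ is surjective for every character $\omega$ by the same slicing of the freeness condition, but this should be stated explicitly.
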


\begin{proof}[Proof of Theorem~\ref{mainthm}]
The proof is based on three lemmas and \cite[Corollary~2.4]{p-b16}.
First, let $(K,\Delta)$ be a closed quantum subgroup of $(H,\Delta)$.
Then, by the functoriality of the induced action (see Section~\ref{two}), an equivariant *-homomorphism 
$\varphi\colon A\to A\circledast^\delta H$
induces  an equivariant *-homomorphism $\overline{\varphi}\colon\overline{A}\to\overline{A {\circledast}^{\delta} H}$. 
Since $\delta'=(\id\ot\id\ot\pi)\circ(\id\ot\Delta)$ and $\pi\colon H\to K$ intertwines the coproducts, for any $p\in\ker(\delta')$, we obtain 
\[
0=\Big(\id\ot\big((\pi\ot\pi)\circ\Delta\big)\Big)(p)=(\id\ot\Delta)\big((\id\ot\pi)(p)\big).
\]
Now it follows from the injectivity of $\Delta$ that 
\[\label{forall}
\forall\;p\in\ker(A\circledast^\delta H\longrightarrow\overline{A\circledast^\delta H})\colon(\id\ot\pi)(p)=0. 
\]

Furthermore, using the naturality of the equivariant join construction in both variables, it is straightforward to verify that
the quotient maps $A\to\overline{A}$ and $\pi\colon H\to K$ yield a \mbox{*-homomorphism}
\[
F\colon A {\circledast}^{\delta} H\longrightarrow\overline{A} {\circledast}^{\overline{\delta}} K.
\]
Since, by \eqref{forall}, $\ker(A {\circledast}^{\delta} H\to\overline{A {\circledast}^{\delta} H})\subseteq\ker(F)$, the map $F$
 factors through $\overline{A {\circledast}^{\delta} H}$ giving a \mbox{*-homomorphism}
\[
\overline{F}\colon\overline{A {\circledast}^{\delta} H}\longrightarrow\overline{A} {\circledast}^{\overline{\delta}} K.
\]
It is easy to check that $\overline{F}$ is equivariant. Thus we obtain an equivariant *-homomorphism 
$\overline{F}\circ\overline{\varphi}\colon\overline{A}\to \overline{A} {\circledast}^{\overline{\delta}} K$.
This proves our first lemma:
\begin{lemma}\label{l1}
Let $(H,\Delta)$ be an arbitrary compact quantum group with a closed quantum subgroup~$(K,\Delta)$. Then \eqref{main} holds
 if
$$
\text{\large\boldmath$\not\!\exists$}\;\;\text{$(K,\Delta)$-equivariant $*$-homomorphism}\;
\overline{A}\longrightarrow \overline{A}\circledast^{\overline{\delta}} K.
$$
\end{lemma}

We want to apply this lemma by taking $(K,\Delta)$ to be a finite classical  subgroup of~$(H,\Delta)$.
In this case, $\overline{A}=A$ (see Section~\ref{two}).
To this end,
observe that the character $\chi\neq\varepsilon$ generates a non-trivial cyclic subgroup of~$(H,\Delta)$.
Indeed, by assumption, there exists $n>1$ such that $\chi^{*n}=\varepsilon$. Hence $X:=\{\varepsilon, \chi,\ldots,\chi^{*(n-1)}\}$
is a non-trivial cyclic subgroup of the maximal classical subgroup~$G_H$, and consequently of~$(H,\Delta)$.
Now, combining Lemma~\ref{l1} for $K=C(X)$ with \cite[Corollary~2.4]{p-b16} and Remark~\ref{remark},
we infer that \eqref{main} holds as claimed.

Next, we will prove the equivalence of \eqref{main} and \eqref{maincone} without assuming any properties of the character~$\chi$.
 If  $\gamma:A \to \CC A$ is as in~\eqref{maincone},
then
$\phi:=(\gamma\ot\id)\circ \delta $
descends to a map as in~\eqref{main}.
Indeed, $\phi (A)\subseteq A\circledast^\delta H$ by the properties of $\gamma$. To check the equivariance of~$\phi$,
we use the identification
\[\label{ident}
C\big([0,1],A\otimes_ \mathrm{min} B\big) =
C([0,1])\otimes A \otimes_ \mathrm{min} B
\]
 and the coassociativity of the coaction $\delta$ to compute:
\begin{align}
(\id\ot\id\ot\Delta)\circ\phi&=(\id\ot\id\ot\Delta)\circ(\gamma\otimes\id)\circ\delta \nonumber\\
&=
(\gamma\otimes\id\otimes\id)\circ (\delta\otimes\id)\circ\delta \nonumber\\
&=
(\phi\otimes\id)\circ\delta .
\end{align}
Thus we have proved:
\begin{lemma}\label{l3a}
Let $(H,\Delta)$ be any compact quantum group. Then \eqref{main} follows from \eqref{maincone}.
\end{lemma}

For the converse implication, we need the existence of counit, which follows from the assumed existence
of a character.
Now, using again the identification~\eqref{ident}, if $\phi$ is as in \eqref{main},
then
\[
\gamma:= (\id\ot\id\ot\veps) \circ \phi
\]
descends to a map as in~\eqref{maincone}.
Indeed, $\gamma (A)\subseteq \CC A$ by the properties of $\phi$. To see the equivariance of $\ev_1\circ\gamma$,
we use  the fact that $\gamma$ takes values in the cone of $A$, whence  for any $a\in A$ there is $a'\in A$ such that
\[\label{aprime}
((\ev_1\otimes\id\otimes\id)\circ \phi)(a)=\delta(a').
\]
 Taking advantage of this formula
and the counitality of the coaction~$\delta$, we compute:
\begin{align}
(\delta\circ(\ev_1\otimes\id)\circ\gamma)(a)&=(\delta\circ(\ev_1\otimes\id)\circ  (\id\ot\id\ot\veps) \circ \phi)(a) \nonumber\\
&=(\delta\circ(\id\otimes\veps)\circ(\ev_1\otimes\id\otimes\id)\circ \phi)(a) \nonumber\\
&=(\delta\circ(\id\otimes\veps)\circ \delta)(a')\nonumber\\
&=\delta(a').
\end{align}
Much in the same way, using the equivariance of $\varphi$, the counitality of the comultiplication~$\Delta$,
and \eqref{aprime}, we obtain:
\begin{align}
&((\ev_1\otimes\id\otimes\id)\circ(\gamma\otimes\id)\circ\delta)(a)\nonumber\\
&=
((\ev_1\otimes\id\otimes\id)\circ( (\id\ot\id\ot\veps\otimes\id) \circ (\phi\otimes\id)\circ\delta)(a)\nonumber\\
&=
((\ev_1\otimes\id\ot\id)\circ  (\id\ot\id\ot\veps\ot\id)\circ(\id\otimes\id\ot\Delta)\circ\phi)(a) \nonumber\\
&=
((\ev_1\otimes\id\ot\id)\circ\phi)(a) \nonumber\\
&=\delta(a').
\end{align}
This establishes the equivariance of  $\ev_1\circ\gamma$ and proves:
\begin{lemma}\label{l3b}
Let $(H,\Delta)$ be a compact quantum group admitting a character. Then \eqref{main} implies~\eqref{maincone}.
\end{lemma}

Summarizing,
assuming that $H$ has a non-trivial torsion character we proved that \eqref{main} is true, and assuming that $H$ has any character
we showed that \eqref{main} and \eqref{maincone} are equivalent.
\end{proof}

\subsection{Non-contractibility of unital C*-algebras admitting free actions}

There are many different flavours of the concept of the contractibility of C*-algebras. We choose the notion of contractibility that
recovers the standard notion of contractibility of compact Hausdorff spaces in the commutative setting, and that best fits
the context of the noncommutative Borsuk-Ulam-type conjectures.
\begin{definition}
Let $A$ be a unital C*-algebra. We call it \emph{unitally contractible} iff
$$
\text{\large\boldmath$\exists$}\;\;\text{unital $*$-homomorphism}\; \gamma:A\longrightarrow
\CC A\;
\text{such that $\ev_1\circ\gamma= \id_A$.
}
$$
\end{definition}
\noindent
Note that $\gamma$ plays the role of a noncommutative retraction. Observe also that, if $A$ is unitally contractible,
then $K_0(A)=\mathbb{Z}[1]\cong\mathbb{Z}$ and $K_1(A)=0$.

Now, since the identity map is always equivariant, as a corollary of~\eqref{maincone},  we infer:
\begin{corollary}\label{coneB}
Let $A$ be a unital  C*-algebra with a free action of a  compact quantum group~$(H,\Delta)$.
Then, if $H$ admits a character different from the counit whose finite convolution power is the counit,
the C*-algebra $A$ is \emph{not} unitally contractible.
\end{corollary}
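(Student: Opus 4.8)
The plan is to obtain this as an immediate consequence of the equivalence established in Theorem~\ref{mainthm}. The standing hypothesis on $H$—namely the existence of a character $\chi\neq\eps$ some finite convolution power of which equals the counit—is exactly the torsion-character hypothesis appearing in the last clause of Theorem~\ref{mainthm}. First I would invoke that theorem to conclude that, in particular, statement \eqref{maincone} holds under this hypothesis: there is \emph{no} $*$-homomorphism $\gamma\colon A\to\CC A$ for which the composite $\ev_1\circ\gamma\colon A\to A$ is $H$-colinear.

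Next I would argue by contraposition. Suppose $A$ were unitally contractible. By definition this furnishes a unital $*$-homomorphism $\gamma\colon A\to\CC A$ with $\ev_1\circ\gamma=\id_A$. The only point needing verification is that this particular $\gamma$ is of the kind ruled out by \eqref{maincone}, that is, that $\ev_1\circ\gamma$ is $H$-colinear. But $\ev_1\circ\gamma=\id_A$, and the identity is manifestly a morphism of right $H$-comodules, since $\delta\circ\id_A=\delta=(\id_A\otimes\id)\circ\delta$. Hence such a $\gamma$ realises exactly the existence statement negated by \eqref{maincone}, a contradiction; therefore $A$ cannot be unitally contractible.

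I do not expect any genuine obstacle here, since all the substance is carried by Theorem~\ref{mainthm}. The entire content of the argument is the observation—already flagged in the sentence preceding the statement—that the identity map is equivariant, so that any noncommutative contraction $\gamma$ automatically yields a $\gamma$ whose $\ev_1\circ\gamma$ is $H$-colinear. The corollary then drops out by contraposition from the validity of \eqref{maincone} under the torsion-character hypothesis.
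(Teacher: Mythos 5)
Your argument is exactly the paper's: the corollary is deduced from the validity of \eqref{maincone} under the torsion-character hypothesis of Theorem~\ref{mainthm}, together with the observation that the identity map is $H$-colinear, so a unital contraction $\gamma$ would be precisely a map of the kind \eqref{maincone} forbids. Correct and essentially identical to the paper's one-line derivation.
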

\noindent
Hence, if $A$ is unitally contractible, no compact quantum group with classical torsion can act freely on~$A$.
By contraposition, if a compact quantum group with classical torsion acts freely on $A$, then $A$ cannot be  unitally contractible.

For instance, since the restriction to $\Z/2\Z$ of the gauge action of $U(1)$ on the Toeplitz algebra $\mathcal{T}$ is free,
$\mathcal{T}$ is not unitally contractible. This fact is interesting because it cannot be deduced immediately from
the K-theory of $\mathcal{T}$,
as it coincides with the K-theory of the classical disc~$D$, which is contractible. Thus the quantum disc
(we think of $\mathcal{T}$ as the C*-algebra of the quantum disc) is not contractible, unlike its classical counterpart.

A way to understand this phenomenon, which we owe to Jingbo Xia, is as follows.
Unlike the function C*-algebra $C(D)$, the Toeplitz algebra $\mathcal{T}$ admits non-trivial  projections as its elements.
We can faithfully represent $\mathcal{T}$ on an infinite-dimensional Hilbert space $l^2(\mathbb{N})$ by sending
its generating isometry $s$ to the unilateral shift~\cite{c-la67}. Then $1-ss^*$ becomes a rank-one projection.
If $\mathcal{T}$ were unitally contractible, we would have a norm-continuous map $p$ from the interval $[0,1]$
to the algebra of bounded operators $B(l^2(\mathbb{N}))$ such that $p(t)$ is a projection for every $t\in[0,1]$,
the projection $p(0)$ is a scalar multiple of the identity operator, and $p(1)=1-ss^*$. However, this contradicts the continuity of the rank,
as the rank of $p(0)$ is zero or infinity and the rank of $p(1)$ is one.

Furthermore, it is worth noticing that Corollary~\ref{coneB} can be interpreted as
a noncommutative generalization of the Brouwer fixed-point theorem.
Indeed, in the particular case when $A=C(S^n)$ and $H=C(\Z/2\Z)$,
this corollary is well known to be equivalent to the Brouwer fixed-point theorem for
continuous maps $B^n\to B^n$ between balls. (See \cite{d-l16} for details.)

Let us now focus on an important special case: $(A,\delta)=(H,\Delta)$. Then the property \eqref{main}  boils down to the non-trivializability
(in the sense of \cite[Definition~3.1]{bdh15}) of the compact quantum 
principal $H$-bundle~$H\circledast^\Delta H$.
In the classical setting $H=C(G)$, this is equivalent to
 the well-known~\cite{h-b79} non-contractibility of non-trivial
compact Hausdorff groups. However, in the quantum case, we knew the non-triviality of $H\circledast^\Delta H$ only
for the quantum
group $SU_q(2)$~\cite{bdh15}.
Now the non-contractibility result follows for the $q$-deformations of any non-trivial compact connected semisimple Lie group from
Theorem~\ref{secondmain} below, and, more generally, for any compact quantum group whose C*-algebra admits a character
different from the counit but whose finite convolution power is the counit, from Theorem~\ref{mainthm}.

A \emph{finite quantum group} is a compact quantum group $(H,\Delta)$  such that $\dim H<\infty$. 
Our aim now is to prove  Conjecture~\ref{mc} in the special case $(A,\delta)=(H,\Delta)$ when $(H,\Delta)$ is a finite quantum group.
To achieve this, we first show that the unital contractibility of $H$ implies the property~\eqref{main} rendering these two properties equivalent,
as in the classical case. This way we recover the
classical mechanism relating the non-triviality of the principal $G$-bundle $G*G$ with the non-contractibility of~$G$. We begin by proving
a general Hopf-algebraic fact.
\begin{proposition}\label{hopflemma}
Let $\mathcal{H}$ be a Hopf algebra over any field. Then
any right-colinear algebra homomorphism  \mbox{$\phi:\mathcal{H}\to \mathcal{H}$} is invertibile.
The inverse is given by:
\begin{equation}\label{inv}
\phi^{-1} = m\circ\big((\veps\circ\phi\circ S)\otimes\id\big)\circ\Delta .
\end{equation}
Here $\veps$, $S$ and $\Delta$ stand respectively for the counit, the antipode, and the coprodcut.
\end{proposition}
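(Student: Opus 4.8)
The plan is to recognize that the colinearity hypothesis rigidly constrains $\phi$: it forces $\phi$ to be the convolution operator attached to the single functional $\chi:=\veps\circ\phi$, and the displayed formula for $\phi^{-1}$ is then just the convolution operator attached to the convolution inverse of $\chi$. First I would record that $\chi=\veps\circ\phi\colon\mathcal{H}\to k$ is an algebra homomorphism, hence an algebra character of $\mathcal{H}$, since it is a composite of the algebra maps $\phi$ and $\veps$.

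The one substantive step is to rewrite $\phi$ itself. Right-colinearity for the regular coaction $\Delta$ reads $\Delta\circ\phi=(\phi\otimes\id)\circ\Delta$. Applying $\veps\otimes\id$ to both sides and invoking the counit axiom $(\veps\otimes\id)\circ\Delta=\id$ on the left-hand side yields
\[
\phi=(\veps\otimes\id)\circ\Delta\circ\phi=(\veps\circ\phi\otimes\id)\circ\Delta=(\chi\otimes\id)\circ\Delta .
\]
Thus $\phi$ is completely determined by the character $\chi$. For brevity write $T_\rho:=m\circ(\rho\otimes\id)\circ\Delta$ for a functional $\rho\in\mathrm{Hom}(\mathcal{H},k)$, that is, $T_\rho(h)=\rho(h\sw1)\,h\sw2$ in Sweedler notation; the identity above says exactly $\phi=T_\chi$, while the asserted inverse is $T_{\chi\circ S}=m\circ\big((\veps\circ\phi\circ S)\otimes\id\big)\circ\Delta$.

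Finally I would assemble the conclusion by a routine convolution computation. Since $\chi$ is a character, multiplicativity of $\chi$ combined with the antipode axiom gives $\chi*(\chi\circ S)=(\chi\circ S)*\chi=\veps$, so $\chi$ is invertible in the convolution algebra with inverse $\chi\circ S$. A direct check using coassociativity shows $T_\rho\circ T_\sigma=T_{\sigma*\rho}$ and $T_\veps=\id$. Hence $\phi=T_\chi$ is invertible with two-sided inverse $T_{\chi\circ S}$, which is precisely the claimed formula.

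The only step carrying genuine content is the passage $\phi=(\chi\otimes\id)\circ\Delta$; I expect this, rather than the subsequent bookkeeping, to be where the colinearity hypothesis is actually used. A minor point to watch is the order of convolution in $T_\rho\circ T_\sigma=T_{\sigma*\rho}$, but since $\chi$ and $\chi\circ S$ are mutually inverse on both sides, this order is immaterial to the final statement. I would also note that the argument never invokes bijectivity of the antipode, so it applies to an arbitrary Hopf algebra over any field, as stated.
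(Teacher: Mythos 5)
Your proof is correct and follows essentially the same route as the paper's: extract $\chi=\veps\circ\phi$ from colinearity via the counit axiom, observe that $\phi$ is the convolution operator of $\chi$, and compose with the convolution operator of $\chi\circ S$. If anything, you are slightly more careful than the paper, since you make explicit that $\chi\circ S$ is the convolution inverse of $\chi$ \emph{because} $\chi$ is an algebra character, and you correctly flag the order of factors in $T_\rho\circ T_\sigma=T_{\sigma*\rho}$ (the paper's displayed identity has the factors in the opposite, strictly speaking incorrect, order, though this is immaterial here for exactly the reason you give).
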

\begin{proof}
Note first that applying $\veps\otimes\id$ to the colinearity equation $\Delta\circ\varphi=(\varphi\otimes\id)\circ\Delta$,
we obtain
\begin{equation}\label{inv1}
\phi = \big((\veps\circ\phi)\otimes\id\big)\circ\Delta = (\veps\circ\phi)*\id,
\end{equation}
where $*$ denotes the convolution product.
We also write the map on the right hand side
of \eqref{inv} as
\[(\veps\circ\phi\circ S)*\id.
\]
Furthermore, for any two linear functionals $\alpha$ and $\beta$ on $\mathcal{H}$, it is straighforward to verify that
\[\label{conv}
(\alpha*\id)\circ(\beta*\id)= \alpha*\beta*\id.
\]
Finally, since $\veps\circ\phi\circ S$  is the convolution inverse of $\veps\circ\phi$, we end the proof.
\end{proof}

Since for a finite quantum group $(H,\Delta)$
the C*-algebra $H$ is also a Hopf algebra, we immediately conclude:
\begin{corollary}\label{equivinv}
Let  $(H,\Delta)$ be a finite quantum group. Then any right-equivariant  \mbox{*-homomorphism}
$\varphi:H\to H$ of  is invertibile.
\end{corollary}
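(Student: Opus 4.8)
The plan is to derive the corollary directly from Proposition~\ref{hopflemma} by recognizing that, for a finite quantum group, the ambient C*-algebra itself carries the structure of a Hopf algebra to which that proposition applies verbatim.

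First I would argue that, when $\dim H<\infty$, the Peter-Weyl Hopf $*$-algebra $\Pol(H)$ coincides with $H$. Indeed, $\Pol(H)$ is always dense in $H$, and a dense subspace of a finite-dimensional space must be the whole space. Consequently $H=\Pol(H)$ is a finite-dimensional Hopf $*$-algebra over $\mathbb{C}$, equipped with the comultiplication $\Delta$, the counit $\varepsilon$, and the antipode $S$ inherited from $\Pol(H)$; in particular it is a Hopf algebra in the sense required by Proposition~\ref{hopflemma}.

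Next I would observe that the action of $(H,\Delta)$ on itself is given by the coaction $\delta=\Delta$, so right-equivariance of a $*$-homomorphism $\varphi\colon H\to H$ reads $\Delta\circ\varphi=(\varphi\otimes\id)\circ\Delta$, which is exactly the right-colinearity hypothesis of Proposition~\ref{hopflemma}. Since a $*$-homomorphism is in particular an algebra homomorphism, $\varphi$ is a right-colinear algebra homomorphism, and the proposition furnishes its two-sided inverse explicitly via formula~\eqref{inv}.

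The only step that genuinely needs care is the first one: verifying that the C*-algebra of a finite quantum group really is a Hopf algebra with a well-defined antipode, so that the purely algebraic Proposition~\ref{hopflemma} can be invoked without any analytic complications. Once the identification $H=\Pol(H)$ is in place, every remaining ingredient is a direct matching of definitions, and the corollary follows immediately.
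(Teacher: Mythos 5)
Your proposal is correct and follows the same route as the paper: the paper simply remarks that for a finite quantum group the C*-algebra $H$ is itself a Hopf algebra and then invokes Proposition~\ref{hopflemma} directly. Your additional justification that $\Pol(H)=H$ in finite dimensions (density plus finite-dimensionality) and the observation that right-equivariance of a $*$-homomorphism is exactly right-colinearity are the correct, if implicit, steps behind the paper's one-line deduction.
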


As any finite quantum group always admits a character, we can  combine Lemma~\ref{l3a}
with Lemma~\ref{l3b} to infer  that \eqref{main} and \eqref{maincone} are equivalent. Furthermore, it follows from
 Corollary~\ref{equivinv} that the unital non-contractibility of $H$ is
equivalent to~\eqref{maincone}. Indeed, $\gamma\circ(\ev_1\circ\gamma)^{-1}\colon A\to\mathcal{C}A$
satisfies $\ev_1\circ(\gamma\circ(\ev_1\circ\gamma)^{-1})=\id$. Consequently, \eqref{main} is equivalent to
the unital non-contractibility of~$H$, which recovers the classical mechanism referred to earlier on.

Next, as any finite-dimensional C*-algebra $A$ is a direct sum of matrix algebras,
$K_0(A)=\mathbb{Z}[1]$ only for $A=\mathbb{C}$. Therefore, if $(H,\Delta)$ is a non-trivial finite quantum group,
then $H$ is not unitally contractible. Thus we have arrived at:
\begin{theorem}\label{k-contractibility}
If a compact quantum group $(H,\Delta)$ is non-trivial and finite,
then Conjecture~\ref{mc} holds for
$(A,\delta)=(H,\Delta)$.
\end{theorem}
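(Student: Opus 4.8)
The plan is to reduce Conjecture~\ref{mc} in the special case $(A,\delta)=(H,\Delta)$ to the \emph{unital non-contractibility} of the C*-algebra $H$, and then to rule out contractibility by a K-theoretic obstruction. Since every finite quantum group carries a character (its counit), Lemma~\ref{l3a} and Lemma~\ref{l3b} apply with $A=H$ and give at once the equivalence of \eqref{main} and \eqref{maincone}. Thus it suffices to decide whether there exists a $*$-homomorphism $\gamma\colon H\to\CC H$ whose composition $\ev_1\circ\gamma$ is $H$-colinear.

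Next I would show that the existence of such a $\gamma$ is equivalent to the unital contractibility of $H$. One direction is immediate: if $H$ is unitally contractible, a unital retraction $\gamma$ satisfies $\ev_1\circ\gamma=\id_H$, which is colinear, so a $\gamma$ of the required kind exists. For the converse I would invoke the automatic invertibility supplied by Corollary~\ref{equivinv}: given any $\gamma$ with $\ev_1\circ\gamma$ a right-equivariant $*$-endomorphism of $H$, this endomorphism is invertible, and hence $\gamma\circ(\ev_1\circ\gamma)^{-1}\colon H\to\CC H$ is a genuine unital contraction, since $\ev_1\circ(\gamma\circ(\ev_1\circ\gamma)^{-1})=\id_H$. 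Consequently \eqref{maincone}, and therefore \eqref{main}, is equivalent to the statement that $H$ is not unitally contractible.

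Finally I would dispose of contractibility by K-theory. As noted after the definition of unital contractibility, a unitally contractible unital C*-algebra $A$ has $K_0(A)=\mathbb{Z}[1]$. Because $\dim H<\infty$, the C*-algebra $H$ is a finite direct sum of matrix algebras, and for such an algebra $K_0(H)=\mathbb{Z}[1]$ forces $H=\mathbb{C}$. Since $(H,\Delta)$ is assumed non-trivial, $H\neq\mathbb{C}$, so $H$ is not unitally contractible; combined with the previous paragraph this yields \eqref{main}, i.e.\ Conjecture~\ref{mc} for $(A,\delta)=(H,\Delta)$.

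The real content—and the step I expect to be the crux—is the automatic invertibility of equivariant $*$-endomorphisms, Corollary~\ref{equivinv}, which is what lets one promote an arbitrary $\gamma$ with colinear $\ev_1\circ\gamma$ to an honest retraction. This is precisely where finiteness of $(H,\Delta)$ enters essentially: it endows $H$ with the Hopf-algebra structure (in particular an antipode) underlying the convolution-inverse formula of Proposition~\ref{hopflemma}. Everything else is bookkeeping with the two cone lemmas and an elementary computation of $K_0$ for finite-dimensional C*-algebras.
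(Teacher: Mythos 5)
Your proposal is correct and follows the paper's own argument essentially step for step: the equivalence of \eqref{main} and \eqref{maincone} via Lemmas~\ref{l3a} and~\ref{l3b} (using the counit as character), the promotion of an arbitrary $\gamma$ to a unital retraction via Corollary~\ref{equivinv} and the map $\gamma\circ(\ev_1\circ\gamma)^{-1}$, and the final $K_0$ obstruction from the fact that a finite-dimensional C*-algebra with $K_0=\mathbb{Z}[1]$ must be $\mathbb{C}$. You also correctly identify Corollary~\ref{equivinv} (via the antipode and Proposition~\ref{hopflemma}) as the point where finiteness enters essentially.
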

\noindent
Note that the above theorem does not follow from Theorem~\ref{mainthm} as there exists
 a finite quantum group whose only character is the counit.

\section{The  noncommutative Borsuk-Ulam conjecture of type II}
\noindent
While the noncommutative Borsuk-Ulam conjecture of type~I remains open as stated, there are counterexamples to
the  noncommutative Borsuk-Ulam conjecture of type~II \cite[Theorem~2.2, Theorem~2.5]{cp16}.
However, assuming that a C*-algebra acted upon admits a character, not only do we remove counterexaples, but also we prove
that the type-II conjecture follows from the type-I conjecture. Note, however, that  we do not know when this assumption is necessary
(see~\cite[Theorem~2.6]{cp16}) for the type-II conjecture to hold.
For instance, one can also remove the counterexamples by assuming that both $A$ and $H$ do not admit a character. This assumption
leads to a modification of the type-II conjecture that might \emph{not} follow from the type-I conjecture.
Nevertheless, it is clear now that the type-I conjecture supersedes
the type-II conjecture. Hence, for the sake of simplicity, in the forthcoming papers,
 we shall refer to the type-I conjecture as the ``noncommutative Borsuk-Ulam-type'' conjecture.

Let us now show the aforementioned implication.
If $A$ has a character $\alpha$, then an \mbox{$(H,\Delta)$-equivariant} *-homomorphism as in \eqref{main2}
composed with the $(H,\Delta)$-equivariant  *-ho\-mo\-mor\-phism
$(\alpha\otimes \id)\circ\delta$ would provide an $(H,\Delta)$-equivariant *-homomorphism  as in~\eqref{main}.
Thus the following modification of \cite[Conjecture~2.3 (type~II)]{bdh15}  is a corollary of Conjecture~\ref{mc}:
\begin{conjecture}\label{c2}
Let $A$ be a unital  C*-algebra equipped with
a free action  
of a non-trivial compact quantum group~$(H,\Delta)$, and let
$A\circledast^\delta H$ be the equivariant noncommutative join C*-algebra of $A$
 and $H$ with the induced free action of~$(H,\Delta)$.
Then, if $A$ admits a character,
\begin{equation*}
\text{\large\boldmath$\not\!\exists$}\;\;\text{$(H,\Delta)$-equivariant $*$-homomorphism}\; H\longrightarrow
A\circledast^\delta H.
\end{equation*}
In other words, the compact quantum principal bundle given by $A\circledast^\delta H$ is not trivializable (see
\cite[Definition~3.1]{bdh15}).
\end{conjecture}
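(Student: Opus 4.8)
The plan is to derive Conjecture~\ref{c2} from the type-I statement of Conjecture~\ref{mc}, which has already been introduced, arguing by contraposition. Suppose there existed an $(H,\Delta)$-equivariant $*$-homomorphism $\psi\colon H\to A\circledast^\delta H$. I would like to manufacture out of $\psi$ an equivariant $*$-homomorphism $A\to A\circledast^\delta H$, which would contradict~\eqref{main}. The essential device is the character $\alpha\colon A\to\mathbb{C}$ whose existence is postulated in the hypothesis; it is precisely what allows one to travel from $A$ back into $H$.

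First I would build a bridge map $\beta\colon A\to H$ by setting $\beta:=(\alpha\otimes\id)\circ\delta$. Since $\delta$ is a unital $*$-homomorphism and $\alpha\otimes\id$ is a tensor product of unital $*$-homomorphisms, $\beta$ is automatically a unital $*$-homomorphism; the only substantive point is to check that it intertwines the coaction $\delta$ on $A$ with the coproduct $\Delta$ on $H$, that is, $\Delta\circ\beta=(\beta\otimes\id)\circ\delta$. Writing $\delta(a)=\sum_i a_i\otimes h_i$, the right-hand side equals $\sum_i(\alpha\otimes\id)\delta(a_i)\otimes h_i$, while the left-hand side equals $\sum_i\alpha(a_i)\Delta(h_i)$. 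These coincide because of the coassociativity identity $(\delta\otimes\id)\circ\delta=(\id\otimes\Delta)\circ\delta$, once one applies $\alpha\otimes\id\otimes\id$ to both of its sides; so $\beta$ is $(H,\Delta)$-equivariant.

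With the equivariance of $\beta$ in hand, I would then form the composite $\psi\circ\beta\colon A\to A\circledast^\delta H$. As a composition of equivariant $*$-homomorphisms it is again an equivariant $*$-homomorphism, and it therefore falls under the scope of~\eqref{main}. Since Conjecture~\ref{mc} asserts that no equivariant $*$-homomorphism $A\to A\circledast^\delta H$ exists, we conclude that $\psi$ cannot exist either, which is exactly the content of Conjecture~\ref{c2}.

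The main obstacle here is conceptual rather than computational: because Conjecture~\ref{mc} is not known in full generality, this scheme only reduces the type-II statement to the type-I statement rather than proving it outright, and the reduction itself is routine once the equivariance of $\beta$ has been verified. It is worth emphasizing that the assumption that $A$ admits a character is doing indispensable work, as it is precisely what produces $\beta$ and hence the bridge back to~\eqref{main}; for this reason the same argument is not expected to apply when $A$ has no character, consistent with the fact that dropping the character hypothesis leads to a genuinely different modification of the conjecture.
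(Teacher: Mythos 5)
Your argument is correct and is precisely the paper's own: the authors derive Conjecture~\ref{c2} from Conjecture~\ref{mc} by composing a hypothetical equivariant $*$-homomorphism $H\to A\circledast^\delta H$ with the equivariant map $(\alpha\otimes\id)\circ\delta\colon A\to H$ supplied by the character~$\alpha$. Like the paper, your proposal correctly presents this as a reduction of the type-II statement to the (still open) type-I statement rather than an unconditional proof, which is all that is claimed here.
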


\subsection{The torsion case}

The following result is an immediate corollary of Theorem~\ref{mainthm}.
It proves Conjecture~\ref{c2}
under an additional assumption.
\begin{corollary}\label{corollarytype2}
Let $A$ be a unital  C*-algebra with
a free action
of a compact quantum group~$(H,\Delta)$, and let
$A\circledast^\delta H$ be the equivariant noncommutative join C*-algebra of $A$
 and $H$ with the induced free action of~$(H,\Delta)$.
Then, if $H$ admits a character different from the counit whose finite convolution power is the counit,
 and $A$ admits any character,
\begin{equation}\label{main2}
\text{\large\boldmath$\not\!\exists$}\;\;\text{$(H,\Delta)$-equivariant $*$-homomorphism}\; H\longrightarrow
A\circledast^\delta H.
\end{equation}
\end{corollary}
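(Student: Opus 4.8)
The plan is to argue by contraposition, deducing the desired non-existence of an equivariant $*$-homomorphism $H\to A\circledast^\delta H$ from the non-existence of an equivariant $*$-homomorphism $A\to A\circledast^\delta H$, which is precisely statement~\eqref{main}. By Theorem~\ref{mainthm}, statement~\eqref{main} holds under the present hypothesis that $H$ admits a character different from the counit whose finite convolution power is the counit. The bridge between the two statements is the character~$\alpha$ on~$A$: I claim it yields a canonical $(H,\Delta)$-equivariant unital $*$-homomorphism from $A$ to $H$.

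First I would verify that the slice map
\[
\alpha_\delta:=(\alpha\ot\id)\circ\delta\colon A\longrightarrow H
\]
is a well-defined unital $*$-homomorphism: since $\alpha$ is a character, $\alpha\ot\id$ is a $*$-homomorphism on the minimal tensor product $A\ot_{\mathrm{min}}H$, and $\delta$ is a $*$-homomorphism by assumption. Next I would check that $\alpha_\delta$ is $(H,\Delta)$-equivariant, i.e.\ that
\[
\Delta\circ\alpha_\delta=(\alpha_\delta\ot\id)\circ\delta.
\]
This follows from the coassociativity identity $(\delta\ot\id)\circ\delta=(\id\ot\Delta)\circ\delta$: applying the slice $\alpha\ot\id\ot\id$ to both sides and using that the scalar-valued $\alpha$ passes through~$\Delta$, one reads off the two sides of the displayed equivariance relation.

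With $\alpha_\delta$ in hand, suppose for contradiction that an $(H,\Delta)$-equivariant unital $*$-homomorphism $\psi\colon H\to A\circledast^\delta H$ as in~\eqref{main2} existed. Then the composite $\psi\circ\alpha_\delta\colon A\to A\circledast^\delta H$ is again a unital $*$-homomorphism, and it is $(H,\Delta)$-equivariant as a composition of equivariant maps. This contradicts statement~\eqref{main}, which holds by Theorem~\ref{mainthm}. Hence no such $\psi$ can exist, which is exactly~\eqref{main2}.

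I do not anticipate any genuine obstacle: all the analytic and torsion-theoretic content has already been absorbed into Theorem~\ref{mainthm} (and, through it, into \cite[Corollary~2.4]{p-b16}). The only point that requires a moment's care is the equivariance of $\alpha_\delta$, namely that slicing the coaction by a character is compatible with the coproduct; this is a one-line diagram chase on coassociativity. Accordingly the statement is indeed an immediate corollary of Theorem~\ref{mainthm}, exactly as claimed in the text preceding Conjecture~\ref{c2}.
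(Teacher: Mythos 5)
Your proposal is correct and follows essentially the same route as the paper: the text preceding Conjecture~\ref{c2} composes a hypothetical equivariant map $H\to A\circledast^\delta H$ with the equivariant $*$-homomorphism $(\alpha\otimes\id)\circ\delta\colon A\to H$ to contradict~\eqref{main}, which Theorem~\ref{mainthm} establishes under the torsion hypothesis. Your additional verification of the equivariance of $(\alpha\otimes\id)\circ\delta$ via coassociativity is exactly the one-line check the paper leaves implicit.
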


\subsection{A K-theoretic variant}

The point of this section is to prove, under a different assumption on a  compact quantum group $(H,\Delta)$,
 a stronger claim than~\eqref{main2}. Not only do we prove that a certain compact quantum principal bundle is not
trivializable, but also we show that there exists an associated noncommutative vector bundle that is not stably trivial.
Our proof is based on the observation
that  the unreduced suspension of a compact quantum group $(H,\Delta)$ is still a
gluing of its cones over the compact quantum group, so the reduced even K-group of the suspension can be studied via a Milnor connecting 
homomorphism relating it to~$K_1(H)$. This mechanism was already used in \cite[Section~2.3]{dhhmw12}, and
reveals itself through the isomorphism $-\beta^{-1}$ (playing the role of the Milnor connecting homomorphism) 
in the proof of Lemma~\ref{lem:a-mf65}.

\begin{theorem}\label{thm2}
Let $(H,\Delta)$ be a compact quantum group admitting a 
finite-dimensional representation $V$ whose $K_1$-class is not trivial, and 
let $A$ be a  unital C*-algebra   equipped with a
free coaction $\delta\colon A\to A\otimes_{\mathrm{min}} H$. Then, if $(A,\delta)=(H,\Delta)$ or
$A$  admits a character, the associated finitely generated projective left
$(A\circledast^\delta H)^{\mathrm{co}\, H}$-module
\[\label{ass2}
\mathcal{P}_{H}\Big(A\overset{\delta}{\circledast} H\Big)\Box V
\]
is \emph{not} stably free.
\end{theorem}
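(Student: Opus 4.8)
The plan is to realise the base algebra $B:=(A\circledast^\delta H)^{\co H}$ as a gluing of two cones and to read off the class of the associated module from a Mayer--Vietoris boundary map valued in $K_1(H)$, mirroring the commutative computation behind Lemma~\ref{lem:a-mf65}. Splitting the interval $[0,1]$ at $t=\tfrac12$ and taking $H$-coinvariants, I would first identify three C*-algebras: the coinvariants $B_-$ of the part over $[0,\tfrac12]$ (carrying the constraint $f(0)\in\C\ot H$), the coinvariants $B_+$ of the part over $[\tfrac12,1]$ (carrying $f(1)\in\delta(A)$), and the coinvariants $B_0$ of the fibre $A\ot_{\mathrm{min}}H$ at $t=\tfrac12$. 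Using $H^{\co H}=\C1$ one checks $B_0\cong A$, that $B_-\cong\CC A$ is a cone, hence K-contractible, and that $B_+$ is the mapping cylinder of the quotient $A\to A^{\co H}$, hence K-equivalent to $A^{\co H}$. Since coinvariants commute with the C*-pullback obtained by splitting at $t=\tfrac12$, one gets $B=B_-\times_{B_0}B_+$ along the two surjective evaluations at $t=\tfrac12$.

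Next I would treat the case $(A,\delta)=(H,\Delta)$. Here $A^{\co H}=\C$, so both $B_\pm$ are K-contractible cones over $H$ and $B$ is the noncommutative unreduced suspension of $H$, glued from two cones along $B_0\cong H$. The Mayer--Vietoris six-term sequence for this pullback, together with $K_*(\CC H)=K_*(\C)$, collapses to
\[
0\longrightarrow K_1(H)\xrightarrow{\ \partial\ }K_0(B)\longrightarrow\ker\!\big((m,n)\mapsto(m-n)[1_H]\big)\longrightarrow0,
\]
where the third map records ranks. Since $[1_H]$ has infinite order in $K_0(H)$ (detected by the Haar state), the kernel is $\Z$ and $\partial$ is injective, so $K_0(B)\cong K_1(H)\oplus\Z$ with the $\Z$ carrying the rank. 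The module $M:=\cP_H(A\circledast^\delta H)\Box V$ has rank $\dim V$, so its images in $K_0(B_\pm)\cong\Z$ are both $\dim V$, whence $[M]-\dim V\,[1_B]$ lands in the reduced summand $\Ima\,\partial\cong K_1(H)$.

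The heart of the argument, and the step I expect to be the main obstacle, is to compute this reduced class as $[M]-\dim V\,[1_B]=\pm\,\partial([V])$, where $[V]=[u]\in K_1(H)$ is the class of the invertible matrix $u\in M_{\dim V}(\Pol(H))$ of the defining corepresentation. Concretely I would exhibit explicit trivialisations of $M$ over the two cones $B_\pm$, coming from the canonical trivialisation of the associated module near the endpoints $t=0$ and $t=1$, verify that the two trivialisations differ over $B_0\cong H$ precisely by the clutching matrix $u$, and then match the Mayer--Vietoris boundary $\partial$ with the noncommutative analogue of the Milnor connecting homomorphism $-\beta^{-1}$ appearing in the proof of Lemma~\ref{lem:a-mf65}, so that the clutching class $[u]$ is sent to $[M]-\dim V\,[1_B]$. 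Since $[u]\neq0$ by hypothesis and $\partial$ is injective, $[M]\neq\dim V\,[1_B]$ in $K_0(B)$, so $M$ is not stably free; this settles the case $(A,\delta)=(H,\Delta)$.

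Finally, if $A$ merely admits a character $\chi$, I would reduce to the previous case by the device already used to reduce Theorem~\ref{Kthm} to $X=G$. The map $\rho:=(\chi\ot\id)\circ\delta\colon A\to H$ is an equivariant $*$-homomorphism, since $\Delta\circ\rho=(\rho\ot\id)\circ\delta$ follows from coassociativity of $\delta$; by naturality of the equivariant join it induces an equivariant $*$-homomorphism $\Psi\colon A\circledast^\delta H\to H\circledast^\Delta H$, hence a $*$-homomorphism $\psi\colon B\to(H\circledast^\Delta H)^{\co H}=:B_H$ on coinvariants. Naturality of the associated-module construction exhibits $M_H:=\cP_H(H\circledast^\Delta H)\Box V$ as the base change $B_H\ot_{B}M$ along $\psi$, so $\psi_*[M]=[M_H]$ while $\psi_*$ carries free modules to free modules; were $M$ stably free, then $M_H$ would be stably free as well, contradicting the case already proved. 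This establishes the theorem in both cases. The two points requiring care are the clutching identification in the suspension case and the base-change compatibility $\psi_*[M]=[M_H]$, the former being the genuinely substantive step.
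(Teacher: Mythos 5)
Your proposal follows essentially the same route as the paper: reduce to $(A,\delta)=(H,\Delta)$ via the equivariant map $(\chi\otimes\id)\circ\delta$, identify $(H\circledast^{\Delta}H)^{\co H}$ with the unreduced suspension $\Sigma H$ glued from two cones over $H$, and detect $[M]-\dim V\,[1]$ as the image, under the injective Mayer--Vietoris boundary $\partial$, of the $K_1$-class of the representation matrix. The two steps you flag as ``requiring care'' are exactly the two external inputs the paper cites rather than proves: the clutching identification of $\mathcal{P}_H(H\circledast^{\Delta}H)\Box V$ with the module of the Milnor idempotent $p_{U_V^{-1}}$ is \cite[Theorem~3.3]{bdh15}, and the base-change compatibility $\psi_*[M]=[M_H]$ is \cite[Theorem~2.1]{hm18}; so your outline is complete modulo precisely what the paper outsources. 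One genuine error in your write-up, though a harmless one: the assertion that $[1_H]$ has infinite order in $K_0(H)$ ``detected by the Haar state'' is unjustified --- the Haar state of a compact quantum group need not be tracial, so it induces no homomorphism on $K_0$, and absent a character on $H$ there is no reason for $[1_H]$ to be non-torsion. The paper's Lemma~\ref{3.4} is organized precisely to sidestep this: non-stable-freeness uses only that $K_0$ of each cone is $\Z[1]\cong\Z$ and that $\partial$ is injective (from $K_1(\CC H)=0$), while the splitting $K_0(\Sigma H)\cong K_1(H)\oplus\Z$ is deferred to an extra injectivity hypothesis on $\Z\to K_0(H)$ (satisfied, e.g., for universal $H$). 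Your own chain of deductions --- $[M]-\dim V\,[1]\in\Ima\partial$, equal to $\pm\partial([u])\neq 0$, and stable freeness forcing rank $\dim V$ over either cone --- never actually uses the infinite-order claim, so deleting that sentence leaves your argument intact. (A cosmetic slip: your $B_+$ is the mapping cylinder of the \emph{inclusion} $A^{\co H}\hookrightarrow A$, not of a quotient map, but this general decomposition is not used once you specialize to $A=H$.)
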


\begin{proof}
As in the classical setting, we first
reduce the theorem to the case $H{\circledast}^{\Delta} H$. We use a character  $\alpha$ on $A$
to define an $H$-equivariant $*$-homomorphism
$ (\alpha\otimes \id)\circ\delta :A\to H$, which induces an $H$-equivariant $*$-homomorphism
\[
f: A\overset{\delta}{\circledast} H\longrightarrow H\overset{\Delta}{\circledast} H.
\]
Due to the equivariance, the $*$-homomorphism $f$ restricts and corestricts to a $*$-homomor\-phism
between fixed-point subalgebras:
\[
f|: \Big(A\overset{\delta}{\circledast} H\Big)^{\mathrm{co}\, H}
\longrightarrow
\Big(H\overset{\Delta}{\circledast} H\Big)^{\mathrm{co}\, H}.
\]
Now we can apply \cite[Theorem 2.1]{hm18}) to obtain
\begin{gather*}
(f|)_*: K_0\Big(\big(A\overset{\delta}{\circledast} H\big)^{\mathrm{co}\, H}\Big)
\longrightarrow
 K_0\Big(\big(H\overset{\Delta}{\circledast} H\big)^{\mathrm{co}\, H}\Big),\\
(f|)_*\Big(\Big[\mathcal{P}_{H}\big(A\overset{\delta}{\circledast} H\big)\Box V\Big]\Big)=
\Big[\mathcal{P}_{H}\big(H\overset{\Delta}{\circledast} H\big)\Box V\Big].
\end{gather*}
Hence, if the module on the right-hand side is not stably free,
then the module on the left-hand side is also not stably free. Thus we have reduced the theorem to the 
case $H{\circledast}^{\Delta} H$, as desired.

To prove the theorem in this case, recall first that our fixed-point subalgebra is now the suspension C*-algebra of~$H$:
\[
\Big(H\overset{\Delta}{\circledast} H\Big)^{\mathrm{co}\, H}=\left\{f\in C([0,1],H)\mid f(0),f(1)\in \C\right\}=:\Sigma H.
\]
A key observation is that $\Sigma H$ is the pullback C*-algebra of the cone C*-algebras of $H$ over $H$ (e.g., see
\cite[(2.26)]{dhhmw12}). Thus it becomes a special
case of the following general setting. 

Let $B$ be the pullback C*-algebra of the  diagram 
\[\nonumber
\xymatrix{
B \ar[r]^{\mathrm{pr}_2}\ar[d]_{\mathrm{pr}_1} & B_2\ar[d]^{\pi_2}\\
B_1\ar[r]_{\pi_1} & A
}
\]
in the category of unital C*-algebras. Assume that $\pi_2$ is surjective. Then we obtain the Mayer-Vietoris six-term exact sequence in K-theory:
\[\label{MV}
\xymatrixcolsep{5pc}\xymatrix{
K_0(B) \ar[r]^{(\mathrm{pr}_1,\,\mathrm{pr}_2)_*\qquad} & K_0(B_1)\oplus K_0(B_2) \ar[r]^{\qquad\pi_{1\,*}-\pi_{2\,*}} & K_0(A) \ar[d]\\
K_1(A)\ar[u]^{\partial} & K_1(B_1)\oplus K_1(B_2) \ar[l] & K_1(B)~. \ar[l]
}
\]
Here $\partial$ is  the Milnor connecting homomorphism  (\cite[p.\ 1191]{hrz13}) defined as follows.
For any  $U\in GL_n(A)$, we set $\partial([U]):=[p_U]-n[1]$, where $p_U$ is the \emph{Milnor idempotent} associated to $U$ 
(see \cite[(2.7)]{dhhmw12}).
\begin{lemma}\label{3.4}
In the above setting, assume that
$K_0(B_i)=\mathbb{Z}[1]\cong\mathbb{Z}$, $K_1(B_i)=0$, $i\in\{1,2\}$. Then, if $0\neq [U]\in K_1(A)$, the $B$-module given
by the associated Milnor idempotent $p_U$ is \emph{not} stably free. Under further assumption that the group homomorphism
given by $\mathbb{Z}\ni1\mapsto [1]\in K_0(A)$ is injective, we also obtain
\[\label{atiyah}
K_0(B)=\partial(K_1(A))\oplus\mathbb{Z}[1]\cong K_1(A)\oplus\mathbb{Z}.
\]
\end{lemma}
\begin{proof}
Under our assumptions, the Mayer-Vietoris exact sequence \eqref{MV} yields the exact sequence:
\[\label{exactness}
0\longrightarrow K_1(A)\stackrel{\partial}{\longrightarrow} K_0(B)\longrightarrow 
\mathbb{Z}[1]\oplus\mathbb{Z}[1]
\longrightarrow K_0(A).
\]
Denote the image of the penultimate map by~$\Gamma$. Then we obtain the short exact sequence:
\[\label{exactness2}
0\longrightarrow K_1(A)\stackrel{\partial}{\longrightarrow} K_0(B)\stackrel{\mathrm{pr}}{\longrightarrow}\Gamma\longrightarrow 0.
\]
Since 
\[
\mathrm{pr}([1])=([1],[1])\in \mathbb{Z}[1]\oplus\mathbb{Z}[1]\cong \mathbb{Z}\oplus\mathbb{Z},
\]
the abelian group $\Gamma$ 
always contains $\mathbb{Z}([1],[1])\cong \mathbb{Z}$. Although it might be bigger,
since any subgroup of a free abelian group is always a free abelian group, $\Gamma$ is a free $\mathbb{Z}$-module, so the sequence 
\eqref{exactness2} splits giving
\[\label{michael}
K_0(B)=\partial(K_1(A))\oplus s(\Gamma)\cong K_1(A)\oplus \Gamma.
\]
Here $s\colon \Gamma\to  K_0(B)$ is a splitting of the corestriction $\mathrm{pr}\colon K_0(B)\to\Gamma$ of 
$(\mathrm{pr}_1,\,\mathrm{pr}_2)_*$.

Suppose now that $[p_U]=m[1]$, i.e.\ the module defined by $p_U$ is stably free. Then, by the exactness of \eqref{exactness2},
\[
0=\mathrm{pr}(\partial([U]))=(m-n)([1],[1]).
\]
Consequently, $m=n$, so $\partial([U])=(m-n)[1]=0$. Hence, remembering the injectivity of~$\partial$, we conclude 
the desired implication:
\[\label{implication}
[U]\neq 0\quad\Rightarrow\quad [p_U]\not\in\mathbb{Z}[1].
\]

Finally, assume now that the group homomorphism
given by $\mathbb{Z}\ni1\mapsto [1]\in K_0(A)$ is injective. Since the rightmost map of \eqref{exactness} sends $(k[1],l[1])$ to $(k-l)[1]$,
 it follows from the assumed injectivity that $(k[1],l[1])$ is in the kernel if and only if~$k=l$. Hence the exactness of \eqref{exactness} implies that 
$\Gamma=\mathbb{Z}([1],[1])\cong\mathbb{Z}$. Taking the splitting $s$ to be given by $s(([1],[1]))=[1]$ turns \eqref{michael} 
to~\eqref{atiyah}.
\end{proof}

We are now ready to finish the proof of the theorem. 
Let $U_V$ be a representation matrix of the corepresentation~$V$. Then $[U_V^{-1}]=-[U_V]\neq 0$. Therefore,
since the pullback diagram of the unreduced suspension $\Sigma H$ satisfies
the assumptions of Lemma~\ref{3.4} needed to prove~\eqref{implication}, we infer that 
$[p_{U_V^{-1}}]\not\in\mathbb{Z}[1]$. 
Combining it with the fact that the module given by $p_{U_V^{-1}}$
is isomorpic
with the associated module $\mathcal{P}_{H}(H{\circledast}^{\Delta} H)\Box V$ (see \cite[Theorem~3.3]{bdh15}, cf.~\cite[Theorem~3.2]{fhmz}), 
we conclude that $\mathcal{P}_{H}(H{\circledast}^{\Delta} H)\Box V$ is not stably free, as claimed.
\end{proof}

\begin{remark}
Let $(H,\Delta)$ be a compact quantum group.
Observe that, if $H$ coincides with its universal form, then it admits a character proving that $K_0(H)$ satisfies the injectivity 
assumption of Lemma~\ref{3.4}. Consequently, \eqref{atiyah} holds, as in the classical case.
\end{remark}

As explained in \cite{bdh15}, the existence of a map as in \eqref{main2} implies the freeness of the associated module
as in~\eqref{ass2}. (To see that the opposite implication is not true, take $A=H=C(\mathbb{Z}/2\mathbb{Z})$.) This brings
us to:
\begin{corollary}\label{3.6}
Under the assumptions of Theorem~\ref{thm2}, the statement \eqref{main2} is true.
\end{corollary}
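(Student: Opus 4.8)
The plan is to derive this corollary directly from Theorem~\ref{thm2} by a contrapositive argument, using the implication recalled immediately above the corollary statement. The key input is the fact, established in~\cite{bdh15}, that the existence of an $(H,\Delta)$-equivariant $*$-homomorphism $H\to A\circledast^\delta H$ as in~\eqref{main2} forces the associated finitely generated projective module
\[
\mathcal{P}_{H}\Big(A\overset{\delta}{\circledast} H\Big)\Box V
\]
to be \emph{free}. This is the crucial structural link: a trivialization of the compact quantum principal bundle $A\circledast^\delta H$ (i.e.\ a map as in~\eqref{main2}) trivializes every module associated to it via a finite-dimensional representation of~$(H,\Delta)$.

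First I would observe that the hypotheses of Corollary~\ref{3.6} are precisely those of Theorem~\ref{thm2}: we are given a compact quantum group $(H,\Delta)$ with a finite-dimensional representation $V$ whose $K_1$-class is nontrivial, a unital C*-algebra $A$ with a free coaction $\delta$, and either $(A,\delta)=(H,\Delta)$ or a character on~$A$. Therefore Theorem~\ref{thm2} applies verbatim and tells us that the module $\mathcal{P}_{H}(A\circledast^\delta H)\Box V$ is not stably free. In particular, since any free module is a fortiori stably free, this module cannot be free.

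The proof then concludes by contraposition. Suppose, for contradiction, that an $(H,\Delta)$-equivariant $*$-homomorphism $H\to A\circledast^\delta H$ as in~\eqref{main2} did exist. By the cited result of~\cite{bdh15}, the associated module $\mathcal{P}_{H}(A\circledast^\delta H)\Box V$ would then be free, contradicting the conclusion of Theorem~\ref{thm2} just recorded. Hence no such map exists, which is exactly the assertion~\eqref{main2}. There is no genuine obstacle here once Theorem~\ref{thm2} is in hand; the only point requiring care is the correct invocation of the freeness-from-trivializability implication, and the trivial but essential remark that \emph{not stably free} implies \emph{not free}. (As the parenthetical remark before the corollary indicates, the converse implication fails in general—witness $A=H=C(\mathbb{Z}/2\mathbb{Z})$—so the stable-nontriviality conclusion of Theorem~\ref{thm2} is strictly stronger than~\eqref{main2}, and the corollary extracts only the weaker trivializability statement from it.)
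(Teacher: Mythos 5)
Your argument is exactly the one the paper intends: the sentence preceding the corollary supplies the implication ``trivializability $\Rightarrow$ freeness of the associated module'' from~\cite{bdh15}, and the corollary follows by contraposition from Theorem~\ref{thm2} since not stably free implies not free. The proposal is correct and matches the paper's (implicit) proof, with the only addition being a careful spelling-out of the trivial ``free $\Rightarrow$ stably free'' step.
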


Our main application of Theorem~\ref{thm2} comes from deformation theory.
Let $G$ be a non-trivial compact connected (but not necessarily simply connected) semisimple Lie group. For $q>0$
consider the quantized universal enveloping algebra $U_q\mfg$, where $\mfg$ is the Lie algebra of~$G$.
For every finite-dimensional representation
$\rho$ of $G$ on~$V$, there is a representation $\rho_q$ of $U_q\mfg$ on the same space
determined uniquely, up to
isomorphism,  by the requirement that the dimensions of the weight spaces remain the same as for
the original representation. The matrix coefficients of these representations span a Hopf *-subalgebra
$\Pol(G_q)$ of $(U_q\mfg)^*$, and the universal C*-completion of $\Pol(G_q)$ defines a compact quantum
group~$(C(G_q),\Delta)$.
For every $(V,\rho)$ as above, the representation $\rho_q$ defines a left $\Pol(G_q)$-comodule
structure on the vector space $V$. We denote the comodule obtained in this way by~$(V_q,\rho_q)$.

\begin{theorem}\label{secondmain}
Let $G$ be a non-trivial compact connected semisimple Lie group, and let $(V,\rho)$ be a representation of $G$ as in Theorem~\ref{Kthm}.
Then, for any $q>0$ and any unital C*-algebra $A$ that admits a character and is equipped with a
free action of $(C(G_q),\Delta_q)$,  the associated finitely generated projective left
$(A\circledast^\delta C(G_q))^{\mathrm{co}\, C(G_q)}$-module
$$
\mathcal{P}_{C(G_q)}\Big(A\overset{\delta}{\circledast} C(G_q)\Big)\Box V_q
$$
is \emph{not} stably free.
\end{theorem}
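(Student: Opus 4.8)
The plan is to derive the theorem directly from Theorem~\ref{thm2} with $H=C(G_q)$ and the finite-dimensional corepresentation $V_q$. Since $A$ is assumed to admit a character, the hypotheses of Theorem~\ref{thm2} are met as soon as one checks that the $K_1$-class of $V_q$ is non-trivial, that is, that the class $[U_{V_q}]\in K_1(C(G_q))$ of a representation matrix $U_{V_q}$ of $V_q$ is non-zero. Granting this, Theorem~\ref{thm2} at once gives that $\mathcal{P}_{C(G_q)}(A\overset{\delta}{\circledast} C(G_q))\Box V_q$ is not stably free, which is the desired conclusion. Thus the whole argument reduces to establishing non-vanishing of $[U_{V_q}]$.

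First I would dispose of the value $q=1$. Here $C(G_1)=C(G)$ and $U_{V_1}$ is the unitary $u\colon G\to GL(V)$ that defines the representation $\rho$, so that $[U_{V_1}]=[u]\in K^1(G)=K_1(C(G))$. This class is non-zero by Lemma~\ref{lem:a-mf65}: under the isomorphism $\beta\colon\tilde K^0(\SSS G)\to K^1(G)$ one has $\beta([\dim V]-[E_V])=[u]$, where $E_V=(G*G)\times^G V$, and the lemma asserts $[\dim V]\neq[E_V]$. In other words, the representation $(V,\rho)$ singled out in Theorem~\ref{Kthm} is precisely the one for which the classical $K_1$-class is non-trivial, and this is the seed of the deformation argument.

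To reach an arbitrary $q>0$ I would invoke deformation invariance of the $K_1$-class. Because $G_q$ and $G_{1/q}$ are isomorphic as compact quantum groups, it is enough to treat $q\in(0,1]$. The idea is that the algebras $C(G_q)$ form a continuous field of C*-algebras over the $q$-interval, reflecting the fact that the structure constants of the Hopf $*$-algebras $\Pol(G_q)$ depend continuously on $q$; moreover the matrix coefficients of $\rho_q$ provide a continuous section $q\mapsto U_{V_q}$ of unitary matrices over this field. Homotopy invariance of $K_1$ along such a field then furnishes a canonical identification $K_1(C(G_q))\cong K_1(C(G))$ carrying $[U_{V_q}]$ to $[U_{V_1}]=[u]$. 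Combined with the previous step, this yields $[U_{V_q}]\neq 0$ for every $q>0$, and the reduction to Theorem~\ref{thm2} is complete.

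The hard part will be the precise justification of this last step: one must establish that the field $(C(G_q))_{q\in(0,1]}$ is genuinely continuous with locally constant K-theory, and that the representation matrices assemble into a continuous section, so that the classes may be transported without loss. For $q$ near $1$ this rests on Nagy's continuous-field picture of the $q$-deformations, while the constancy of $K_*(C(G_q))$ and the invariance of representation classes under the deformation follow from the K-theoretic analysis of Neshveyev and Tuset; the remaining work is to patch these local statements into the single global assertion that $[U_{V_q}]$ is non-zero for all $q>0$ at once.
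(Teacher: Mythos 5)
Your proposal is correct and follows essentially the same route as the paper: reduce via Theorem~\ref{thm2} to the non-vanishing of $[U_{V_q}]\in K_1(C(G_q))$, note that the case $q=1$ is exactly the hypothesis on $(V,\rho)$ from Theorem~\ref{Kthm}, and transport the class along the continuous field $(C(G_q))_q$ using the fact that the evaluation maps on the section algebra induce isomorphisms in $K$-theory (the paper cites Neshveyev--Tuset for both the continuous-field structure and this $K$-theoretic statement). The only cosmetic difference is your reduction to $q\in(0,1]$ via $G_q\cong G_{1/q}$, which the paper avoids by simply working over an arbitrary interval $[a,b]$ containing $1$.
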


\begin{proof}
The above assumption on the representation $(V,\rho)$ is equivalent to saying that the $K_1$-class
of its representation matrix is nonzero. By Theorem~\ref{thm2}, in order to prove the
result, it suffices to show that the $K_1$-class of a representation matrix of $(V_q,\rho_q)$ is also not zero, which
follows from the computation of the $K$-theory of
$C(G_q)$ in~\cite[Theorem~6.1]{nt12}.

In more detail, if we fix numbers $0<a<b$ such that $1\in[a,b]$,
then the C*-algebras $C(G_q)$ form a continuous field in a canonical
way \cite{nt11}. Namely, we can identify the spaces $V_q$ with $V$
in such a way that the standard generators of $U_q\mfg$, viewed as
operators on~$V$, depend continuously on $q$. Then the matrix
coefficients in $C(G_q)$ form continuous sections. 
The corresponding section algebra is denoted by $C(G_{[a,b]})$.

By the definition of the continuous field structure, the
representations $V_q$, viewed as elements of $GL_n(C(G_q))$, form a
continuous section, that is, define an element $V_{[a,b]}\in
GL_n(C(G_{[a,b]}))$. As follows from~\cite[Theorem~6.1]{nt12}, the
evaluation maps $C(G_{[a,b]})\to C(G_q)$ define isomorphisms in
$K$-theory. Since $V_{[a,b]}$ is mapped to $V_q$ and $[V_1]=[V]\ne0$,
we conclude that $[V_q]\ne0$ for all $q\in[a,b]$.
\end{proof}

Our second application of Theorem~\ref{thm2} concerns the reduced C*-algebra $C_r^*(F_n)$
of the free group on $n$ elements with $n>1$. Together with the coproduct $\Delta_n$ determined
by sending $g\in F_n$ to
$g\otimes g$, it is an abelian compact quantum group. As the generators of $F_n$ have non-trivial
classes in $K_1(C_r^*(F_n))$ and can be considered as matrices of one-dimensional representations,
we  conclude that Theorem~\ref{thm2} applies to $(H,\Delta)=(C_r^*(F_n),\Delta_n)$.
Hence Corollary~\ref{3.6} yields:
\begin{theorem}\label{k-contractibility2}
For $n>1$, 
Conjecture~\ref{mc} holds for
$(A,\delta)=(C_r^*(F_n),\Delta_n)=(H,\Delta)$.
\end{theorem}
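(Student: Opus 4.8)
The plan is to place $(C_r^*(F_n),\Delta_n)$ into the framework of Theorem~\ref{thm2}, entering through the branch $(A,\delta)=(H,\Delta)$. The first observation that makes this work is that, in this self-join situation, the type-I statement \eqref{main} and the type-II statement \eqref{main2} literally coincide: both assert the non-existence of an $(H,\Delta)$-equivariant $*$-homomorphism $H\to H\circledast^\Delta H$. Consequently, once the hypotheses of Theorem~\ref{thm2} are verified for $(H,\Delta)=(C_r^*(F_n),\Delta_n)$, Corollary~\ref{3.6} will immediately yield \eqref{main2}, which is exactly Conjecture~\ref{mc} in the present case. It is essential that we use the branch $(A,\delta)=(H,\Delta)$ rather than the ``$A$ admits a character'' branch: for $n>1$ the algebra $C_r^*(F_n)$ possesses no character whatsoever, since the augmentation of $\mathbb{C}F_n$ does not extend to the reduced C*-algebra of the non-amenable group $F_n$. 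This is precisely what places the result in the genuinely characterless case~(3) of the introduction and puts it beyond the reach of the torsion methods of Theorem~\ref{mainthm}.

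Next I would set up the quantum-group data and the corepresentation needed to feed the theorem. The formula $\Delta_n(g)=g\otimes g$ on the canonical unitaries $g\in F_n\subset C_r^*(F_n)$ extends to a coassociative, injective, unital $*$-homomorphism satisfying the cancellation laws, so $(C_r^*(F_n),\Delta_n)$ is a compact quantum group, non-trivial because it is infinite-dimensional; concretely it is the compact quantum group dual to the discrete group $F_n$. The coaction on $A=H$ is $\delta=\Delta_n$, and freeness is automatic: the freeness condition $\{(x\otimes1)\Delta_n(y)\}^{cls}=H\otimes_{\mathrm{min}}H$ is just one of the cancellation laws~\eqref{canc}. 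Each generator $g_i$ is a group-like unitary, $\Delta_n(g_i)=g_i\otimes g_i$, hence a one-dimensional unitary corepresentation; its representation matrix is the $1\times1$ unitary $(g_i)$, whose $K_1$-class is $[g_i]\in K_1(C_r^*(F_n))$.

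The only genuinely non-formal step, and the one I regard as the heart of the matter, is the non-triviality of this $K_1$-class. Here I would invoke the classical computation of the K-theory of the reduced free-group C*-algebra (Pimsner--Voiculescu, Cuntz): for $n>1$ one has $K_1(C_r^*(F_n))\cong\mathbb{Z}^n$, with the classes $[g_1],\dots,[g_n]$ of the canonical unitaries forming a free basis, so in particular $[g_i]\neq0$. Taking $V$ to be the one-dimensional corepresentation $g_1$, all hypotheses of Theorem~\ref{thm2} hold in the branch $(A,\delta)=(H,\Delta)$, whence $\mathcal{P}_H(H\circledast^\Delta H)\Box V$ is not stably free, and Corollary~\ref{3.6} delivers \eqref{main2}; by the coincidence noted above this is Conjecture~\ref{mc} for $(A,\delta)=(C_r^*(F_n),\Delta_n)=(H,\Delta)$. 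I expect no serious obstacle beyond correctly citing the value of $K_1(C_r^*(F_n))$ with its generators. The one bookkeeping subtlety worth flagging is that the absence of characters obliges us to enter Theorem~\ref{thm2} through the self-join branch, in which the character-dependent reduction step of its proof is vacuous (we already sit in the case $H\circledast^\Delta H$), so that no character is ever invoked.
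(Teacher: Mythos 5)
Your proposal is correct and follows essentially the same route as the paper: the authors likewise feed the one-dimensional corepresentations given by the group-like generators $g_i$, whose $K_1$-classes are non-trivial, into Theorem~\ref{thm2} via the self-join branch $(A,\delta)=(H,\Delta)$ and then invoke Corollary~\ref{3.6}, using that \eqref{main} and \eqref{main2} coincide when $A=H$. Your additional remarks --- that $C_r^*(F_n)$ admits no character (the paper notes its simplicity for the same purpose) and the explicit Pimsner--Voiculescu computation of $K_1$ --- only make explicit what the paper leaves implicit.
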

\noindent
Note that the above theorem does not follow from Theorem~\ref{mainthm} as $C_r^*(F_n)$ is a simple C*-algebra for~$n>1$.

\section*{Acknowledgements}
\noindent
This work is part of the project \emph{Quantum Dynamics} sponsored by  EU-grant RISE~691246
and Polish Government grants 317281 and~328941.
This work was also partially supported by the European Research Council under the EU's Seventh Framework
Programme (FP/2007-2013)/ERC Grant Agreement~307663. Finally, Ludwik D\k{a}browski is grateful
for his support at IMPAN provided by
Simons-Fundation grant 346300 and a Polish Government MNiSW 2015--2019 matching fund.
We are very grateful to Alexandru Chirvasitu and  Benjamin Passer for suggestions about the manuscript.
Finally, we are delighted to thank Paul F.\ Baum, Kenny De Commer and Jingbo Xia for extremely
helpful discussions.


\begin{thebibliography}{99}


\bibitem{a-mf65}
M.~F.~Atiyah:
\emph{On the $K$-theory of compact Lie groups},
Topology 4 (1965) 95--99.\vspace*{1mm}

\bibitem{bdh15}
P.~F.~Baum, L.~D\k abrowski, P.~M.~Hajac:
\emph{Noncommutative Borsuk-Ulam-type conjectures},
Banach Center Publications 106 (2015) 9--18.
\vspace*{1mm}

\bibitem{bhms07}
P.~F.~Baum, P.~M.~Hajac, R.~Matthes, W.~Szyma\'nski:
\emph{Noncommutative Geometry Approach to Principal and Associated Bundles},
arXiv:math/0701033. \vspace*{1mm}

\bibitem{bdh16}
P.~F.~Baum, K.~De~Commer, P.~M.~Hajac:
\emph{Free actions of compact quantum group on unital C*-algebras},
\emph{Free actions of compact quantum group on unital C*-algebras}, Documenta Math. 22 (2017) 825--849.
\vspace*{1mm}

\bibitem{b-k33} K.~Borsuk:
\emph{Drei S\"atze \"uber die n-dimensionale euklidische Sph\"are},
Fund. Math. 35 (1933) 177--190.\vspace*{1mm}

\bibitem{cdt} A.~Chirvasitu,  L.~D\k{a}browski, M.~Tobolski:
\emph{The weak Hilbert-Smith conjecture from a Borsuk-Ulam-type conjecture}, 
 arXiv:1612.09567. 
\vspace*{1mm}

\bibitem{cp16} A.~Chirvasitu, B.~Passer:
\emph{Compact group actions on topological and noncommutative joins},
 arXiv:1604.02173v2. To appear.
\vspace*{1mm}

\bibitem{c-la67} L.~A.~Coburn:
\emph{The C*-algebra generated by an isometry},
Bull. Amer. Math. Soc. 73 (1967), 722--726.

\bibitem{d-l16} L.~D\k{a}browski:
\emph{Towards a noncommutative Brouwer fixed-point theorem},
J. Geom. Phys. 105 (2016) 60--65.\vspace*{1mm}

\bibitem{dhh15}
L.~D\k{a}browski, T.~Hadfield, P.~M.~Hajac:
\emph{Equivariant join and fusion of noncommutative algebras},
SIGMA 11 (2015) 082.\vspace*{1mm}

\bibitem{dhhmw12}
L.~D\k{a}browski, T.~Hadfield, P.~M.~Hajac, R.~Matthes, E.~Wagner:
\emph{Index pairings for pullbacks of C*-algebras}, in
\emph{Operator algebras and quantum groups}, 67--84,
Banach Center Publ., 98, Polish Acad. Sci. Inst. Math., Warsaw,  
2012.\vspace*{1mm}

\bibitem{e-da00}
D.~A.~Ellwood:
\emph{A new characterisation of principal actions},
J. Funct. Anal. 173 no.~1 (2000) 49--60.\vspace*{1mm}

\bibitem{fhmz}
C.~Farsi, P.~M.~Hajac, T.~Maszczyk, B.~Zieli\'nski:
\emph{Rank-two Milnor idempotents for the multipullback quantum complex projective plane},
arXiv:1708.04426.\vspace*{1mm}

\bibitem{hm18}
P.~M.~Hajac, T.~Maszczyk:
\emph{Pullbacks and nontriviality of associated noncommutative vector bundles}, arXiv:1601.00021.
To appear in Journal of Noncommutative Geometry.
\vspace*{1mm}

\bibitem{hnpsz18}
P.~M.~Hajac, R.~Nest, D.~Pask, A.~Sims, B.~Zieli\'nski:
\emph{The K-theory of twisted multipullback quantum odd spheres and complex projective spaces},
arXiv:1512.08816. To appear in Journal of Noncommutative Geometry.
\vspace*{1mm}


\bibitem{hrz13}
P.~M.~Hajac, A.~Rennie, B.~Zieli\'nski:
\emph{The K-theory of Heegaard quantum lens spaces},
Journal of Noncommutative Geometry, 7 (2013) 1185--1216.
\vspace*{1mm}

\bibitem{h-l67}
L.~Hodgkin:
\emph{On the $K$-theory of Lie groups},
Topology 6 (1967) 1--36.\vspace*{1mm}

\bibitem{h-b79} B.~Hoffmann:
\emph{A compact contractible topological group is trivial},
Archiv Math. 32, no.~1 (1979) 585--587.\vspace*{1mm}

\bibitem{h-kh96} K.~H.~Hofmann:
\emph{Elements of compact semi-groups},
Charles E. Merill Books Inc., Columbus, Ohio 1996.
\vspace*{1mm}

\bibitem{m-j03}
J.~Matou\v{s}ek:
\emph{Using the Borsuk-Ulam theorem},
Lectures on topological methods in combinatorics and geometry.
Written in cooperation with A.~Bj\: oner and G.~M.~Ziegler. Universitext. Springer-Verlag, Berlin, 2003.
\vspace*{1mm}

\bibitem{nt11}
S.~Neshveyev, L.~Tuset:
\emph{K-homology class of the Dirac operator on a compact quantum group}, Doc. Math. 16 (2011) 767--780.\vspace*{1mm}

\bibitem{nt12}
S.~Neshveyev, L.~Tuset:
\emph{Quantized algebras of functions on homogeneous spaces with Poisson stabilizers},
Comm. Math. Phys. 312 (2012) 223--250.\vspace*{1mm}

\bibitem{p-b16} B.~Passer:
\emph{Free Actions on C*-algebra Suspensions and Joins by Finite Cyclic Groups}
arXiv:1510.04100.
To appear.
\vspace*{1mm}

\bibitem{p-p95} P.~Podle\'s:
\emph{Symmetries of quantum spaces. Subgroups and quotient spaces of quantum $SU(2)$ and $SO(3)$ groups},
 Comm. Math. Phys. 170 (1995) 1--20.\vspace*{1mm}

\bibitem{w-sl98}
S.~L.~Woronowicz:
\emph{Compact quantum groups},
in \emph{Sym\'etries quantiques (Les Houches, 1995)},
845--884, North-Holland, Amsterdam, 1998.

\end{thebibliography}
\end{document}